\newtheorem{assumption}{Assumption}
\newtheorem{remark}{Remark}
\newtheorem{lemma}{Lemma} 
\newtheorem{theorem}{Theorem}
\def\BibTeX{{\rm B\kern-.05em{\sc i\kern-.025em b}\kern-.08em
    T\kern-.1667em\lower.7ex\hbox{E}\kern-.125emX}}
\begin{document}
\title{Consensus-based Distributed Optimization for Multi-agent Systems over Multiplex Networks}
\author{Christian D.~Rodr\'iguez-Camargo,
       Andr\'es F.~Urquijo-Rodr\'iguez,
        and~Eduardo~Mojica-Nava
\thanks{Manuscript received MM DD, YYYY; revised MM DD, YYYY.}
\thanks{C. D. Rodr\'iguez-Camargo is with The Atomic, Molecular, Optical and Positron Physics (AMOPP) group of the Department of Physics and Astronomy from the University College London and Programa de Investigaci\'on sobre Adquisici\'on y An\'alisis de Se\~nales (PAAS-UN) from Universidad Nacional de Colombia. E-mail: christian.rodriguez-camargo.21@ucl.ac.uk}

\thanks{A. F. Urquijo-Rodr\'iguez is with the Industrial Engineering Program and the Centro de Estudios Industriales y Log\'isticos para la Productividad (CEIL, MD) both from the Corporaci\'on Universitaria Minuto de Dios, and Grupo de Superconductividad y Nanotecnolog\'ia and Grupo de \'Optica e Informaci\'on Cu\'antica both from the Department of Physics of Universidad Nacional de Colombia.}

\thanks{E. Mojica-Nava is with the Department of Electrical and Electronics Engineering and Programa de Investigaci\'on sobre Adquisici\'on y An\'alisis de Se\~nales (PAAS-UN) from Universidad Nacional de Colombia, Bogotá, Colombia.
E-mail: eamojican@unal.edu.co}}

\maketitle

\begin{abstract}
Multilayer networks provide a more comprehensive framework for exploring real-world and engineering systems than traditional single-layer networks, consisting of multiple interacting networks. However, despite significant research in distributed optimization for single-layer networks, similar progress for multilayer systems is lacking. This paper proposes two algorithms for distributed optimization problems in multiplex networks using the supra-Laplacian matrix and its diffusion dynamics. The algorithms include a distributed saddle-point algorithm and its variation as a distributed gradient descent algorithm. By relating consensus and diffusion dynamics, we obtain the multiplex supra-Laplacian matrix. We extend the distributed gradient descent algorithm for multiplex networks using this matrix and analyze the convergence of both algorithms with several theoretical results. Numerical examples validate our proposed algorithms, and we explore the impact of interlayer diffusion on consensus time. We also present a coordinated dispatch for interdependent infrastructure networks (energy-gas) to demonstrate the application of the proposed framework to real engineering problems. 
\end{abstract}

\begin{IEEEkeywords}
Distributed optimization, multiplex networks, saddle-point flow, diffusion.
\end{IEEEkeywords}

\section{Introduction}
\label{sec:introduction}

\IEEEPARstart{D}{uring} the last two decades, the single-layer network representation of complex systems has proven to be a valuable tool for revealing the relationships between topological properties of different networked systems and their dynamics~\cite{barabasi2016network, newman2010networks}. The quantitative study of complex networks has generated a wide set of interesting results in the deeper understanding of complex systems, ranging from application in engineering, and natural sciences to social sciences~\cite{albert2002statistical, dorogovtsev2003evolution}. 
However, as network science advances and more data is gathered from real-world networked systems, there is a growing need to understand real and engineered systems as networks of networks rather than isolated networks~\cite{bianconi2018multilayer}. These systems are commonly referred to as multilayer networks~\cite{kivela2014multilayer}.

 A multilayer network refers to systems composed of several interacting networks~\cite{bianconi2018multilayer, kivela2014multilayer, boccaletti2014structure}.  Multilayer networks were originally presented in social science to describe the various possibilities of connections between nodes in a social network~\cite{boccaletti2014structure}. Today, multilayer networks are being studied in various fields, such as neuroscience~\cite{vaiana2020multilayer}, molecular biology~\cite{kiani2021networks}, ecology~\cite{pilosof2017multilayer}, transportation~\cite{du2016physics}, and power systems~\cite{toro2021multiplex}. Despite there are significant advances in phenomena such as percolation~\cite{santoro2020optimal}, cascades failures~\cite{turalska2019cascading}, diffusion dynamics~\cite{perc2019diffusion}, resilient consensus \cite{shang2020resilient}, adaptive control \cite{guo2021adaptive}, epidemic spreading~\cite{salehi2015spreading}, there has been a lack of progress in generalizing single-layer network distributed optimization problems to multilayer networks. 

In traditional distributed optimization problems, each node typically only has the availability to a convex function, and the objective function is expressed as a sum of these functions~\cite{yang2019survey}. Each agent aims to achieve the optimal solution of the total convex function by exchanging information with its neighboring nodes and performing local computations~\cite{nedic2009distributed, nedic2018distributed}. Numerous techniques have been suggested to address this distributed optimization problem, from discrete-time gradient descent \cite{nedic2009distributed}, \cite{nedic2018distributed} to recent continuous-time approaches \cite{Elia2011control}, \cite{kia2015dynamic}, \cite{lin2016distributed}, \cite{yang2016multi}, \cite{wu2023distributed},  \cite{ma2019novel}. From a continuous-time perspective, the methods based on saddle-point dynamics \cite{cherukuri2017saddle}, \cite{feijer2010stability}  have emerged as an alternative to view optimization algorithms as dynamical systems \cite{colombino2019online}, bringing up brand-new scenarios to deal with complex real-world systems such as complex networks as it is discussed in the recently introduced concept of feedback-based optimization \cite{hauswirth2021optimization}, \cite{feedback2022}.

In this paper, we extend control-based and consensus-based distributed optimization algorithms from single-layer networks to multiplex networks, which generalizes the relationship between consensus and diffusion dynamics in single-layer networks. We introduce the augmented supra-Lagrangian concept, which incorporates implicit gradient tracking  \cite{kia2015dynamic} \cite{nedic2017achieving} for both intralayer and interlayer gradients. A multiplex network is a collection of graphs comprised of $M$ distinct layers, where the same set of nodes $N$ are connected through links associated with $M$ different kinds~\cite{bianconi2018multilayer}. A substantial distinction exists between considering all interactions at the same level and incorporating heterogeneous information of various interactions at different levels. In a multiplex network, each interaction carries a distinct implication, and this characteristic is correlated with other structural features, enabling us to glean more information from the intricate system under study. Typically, the dynamic interactions among nodes in a multiplex network assume varying functional forms based on the nature of the link \cite{bianconi2018multilayer}. The increasing interest in multiplex networks has highlighted the significant impact of their structure on the behavior of dynamic processes. Therefore, it is crucial to study the properties and dynamics of multiplex networks to comprehend the complex relationships and interdependencies that arise in the real world. The variation in diffusion rates across different types of links within multiplex networks fundamentally alters the characteristics of this dynamic process, leading to a range of practical implications. From an engineering perspective, it is anticipated that multiplex networks will be utilized to model and manage energy in multienergy systems. A multienergy system is an integrated energy system that employs multiple sources of energy such as gas and electricity, for instance, in a coordinated and optimized manner to provide efficient, reliable, and sustainable energy services \cite{mancarella2014mes}. The objective is to maximize the use of renewable and low-carbon energy sources, as well as to enhance energy efficiency and reduce greenhouse gas emissions \cite{guelpa2019towards}. Multienergy systems are becoming increasingly significant as a means of satisfying the growing demand for energy while decreasing the environmental impact of energy production and consumption \cite{chertkov2020multienergy}. The proposed framework is applied to real engineering problems by presenting a coordinated dispatch for interdependent infrastructure networks such as energy and gas.

The main contributions of this work are threefold. First, by recognizing the relationship and mathematical similarity between the consensus equation in multi-agent systems, which is utilized to solve distributed optimization problems, and the diffusion equation in statistical mechanics, it has been possible to propose connections between diffusion processes in multiplex networks and consensus-based distributed optimization algorithms. We expand upon these definitions by utilizing the supra-Laplacian matrix of a multiplex network. Secondly, we derive a distributed saddle point algorithm for convex optimization in multiplex networks and a variation of this algorithm; a distributed gradient descent algorithm for multiplex networks using the relationship between the diffusion equation and consensus equation for multiplex networks using the soft-penalty method. Finally, we show the convergence of each node in each layer to the optimal value of the convex objective function. We demonstrate the accuracy of both algorithms. Notably, only knowledge of the neighborhood is required to achieve the local and global optimal value in multiplex networks. Moreover, we investigate the existence of critical phenomena in the consensus time by manipulating the diffusion constants. Several numerical examples are presented to validate the proposed algorithms, and the impact of interlayer diffusion on consensus time is explored. A coordinated dispatch for interdependent infrastructure networks (energy-gas) is finally presented to demonstrate the application of the proposed framework to real engineering problems.

This paper is organized as follows. In Section II, we present the preliminaries of multiplex networks, the diffusion processes, and the control approach of consensus and diffusion dynamics. In Section III, we establish the problem statement of distributed optimization in multiplex networks. In Section IV, we present the main result of this paper and a convergence analysis of our theorem related to the distributed primal-dual saddle-point algorithm for multiplex networks. In Section V we present the generalized gradient descent algorithm. In Section VI numerical examples are presented with an additional real-life example in energy management systems. In addition, we evaluate the existence of critical phenomena in the function of the diffusion parameters. The conclusions are presented in Section VII.

\section{Preliminaries}

\subsection{Consensus-based Optimization}

We start with the basic definitions of graph theory. A graph $G = G(V, E)$ is a finite set of vertices $V$ with a set of edges $E$. A vertex $v$ in $G$ is denoted by either $v \in V$ or $v \in G$. Furthermore, A graph $G$ is connected if a path can be obtained between any two vertices $v_r$ and $v_p$ in the graph, that is, a sequence of vertices $v_r = v_{0} \sim v_{1} \sim v_{2} \sim \cdots \sim v_{n-1} \sim v_{n} = v_p$ such that each pair of consecutive vertices $v_{j-1}$ and $v_{j}$ are connected by an edge for $j = 1, 2, ..., n$. Here, $v_i \sim v_j$ denotes the fact that two vertices $v_i$ and $v_j$ are connected by a link in $E$. An undirected weighted graph is associated with a weight function $\omega: V \times V \rightarrow \mathbb{R}^{+}$ satisfying:
\begin{itemize}
    \item[(i)]$\omega (v_i,v_j)=\omega (v_j,v_i)$, $v_i, v_j \in V$.
    \item[(ii)] $\omega (v_i,v_j)=0$ if and only if $\{ v_i, v_j \}\not\in V$.
\end{itemize}

We denote $\{ v_i, v_j\}$ as the link connecting the vertices $v_i$ and $v_j$. The degree $d_{\omega}(v_i)$ of a vertex $v_i$ is defined to be $d_{\omega}(v_i):=\sum _{j\in \mathcal{N}_i}\omega (v_i,v_j)$, where $\mathcal{N}_i$ is the neighborhood of the node $v_i$. 

For a measure related to the weighted degree of each vertex, the $\omega$-Laplacian operator $\Delta _{\omega}$ is represented as a matrix as
\begin{equation*}
\Delta _{\omega}(v_i,v_j)=
\begin{cases}
1-\frac{\omega _{v_i,v_i}}{d_{\omega}(v_i)}, \quad \text{if}\, v_i=v_j\, \text{and}\, d_{\omega}(v_i)\neq 0,\\
-\frac{\omega (v_i,v_j)}{d_{\omega}(v_i)}, \quad \text{if}\, v_i\sim v_j, \\
0,\quad \text{otherwise}.
\end{cases}
\end{equation*}

The matrix representation of $\Delta _{\omega}$ is equivalent to the Laplacian matrix $L$ given by $L=D^{1/2}\Delta _{\omega}D^{-1/2}$,where $D$ is a diagonal matrix with entries $D(v_i,v_j)=d_{\omega}(v_i)$. $L$ is a non-negative definite symmetric matrix, and its eigenvalues are given by $0<\lambda _{0}\leq \lambda _{1}\leq \lambda _{2}\leq \cdots \leq \lambda _{N-1}$. It is possible to verify that $\lambda _{0}=0$, $\lambda _{1}>0$.

Now, we can introduce the traditional consensus equation for single-layer networks of multiagent systems using the Laplacian matrix. First, consider the dynamic for each node as 

\begin{equation}\label{consenagent}
\dot{x}_{i}(t)= -\sum _{j \in \mathcal{N}_{i}}\omega _{ij}(t)(x_{i}(t)-x_{j}(t)),
\end{equation}
where $x_{i}(t)$ is the information state of an agent in node $i$, $\mathcal{N}_{i}$ is the neighborhood of node $i$, and $\omega _{ij}(t)$ is a positive time-varying weighting factor. Equation \eqref{consenagent} can be presented in compact form as a diffusion equation

\begin{equation} \label{consen}
\dot{x}=-Lx,    
\end{equation}
where $L$ is the graph Laplacian.  Using the consensus equation \eqref{consen}, several continuous-time saddle point dynamics have proposed to solve a distributed optimization problem in multiagent systems \cite{cherukuri2017saddle, feijer2010stability, Elia2011control, lin2016distributed}

\begin{equation}\label{opprob1}
\min _{x \in \mathbb{R}^{n}}\tilde{f}(x)=\sum _{i=1}^{n}f_{i}(x) \quad \text{s.t.} \quad
Lx = 0,
\end{equation}
where $f(x)$ is the global objective function to be minimized subject to the communication constraints between agents represented by the graph Laplacian. The corresponding saddle-point dynamics for optimization problem \eqref{opprob1}

\begin{eqnarray*}
\dot{x} &=& -L x - \frac{\partial \tilde{f}(x)}{\partial x}-L\lambda,\\
\dot{\lambda}&=& Lx,
\end{eqnarray*}
where $\lambda$ is the vector of Lagrange multipliers \cite{bertsekas2009convex}.

Several references have suggested the use of a control approach to interpret and improve diffusion and consensus dynamics in situations where control elements can be employed \cite{dorfler2017distributed, hauswirth2021optimization, colombino2019online, Elia2011control}. By leveraging this approach, new distributed optimization problems can be obtained. For instance, assume that each agent has to find the solution to the following optimization problem
\begin{equation}
\min \sum _{i=1}^{n}f_{i}(y) \quad \text{s.t.} \quad y\in \mathbb{R}^{n},
\label{problemopt001}
\end{equation}
where $f_{i}:\mathbb{R}^{n}\rightarrow \mathbb{R}$ is a convex function which is only available to node $i$. To solve Problem (\ref{problemopt001}), it is proposed a continuous time dynamic model based on driving the state to the optimal solution set~\cite{lin2016distributed}. Assuming that the state of node $i$ is $y_{i}\in \mathbb{R}^{n}$, the dynamics of each node are assumed to be described by the following ordinary differential equations

\begin{equation}\label{dyndist}
\dot{y}_{i}(t)=\sum _{j\in N_{i}}a_{ij}(y_{j}(t)-y_{i}(t)) -g_{i}(y_{i}(t)), 
\end{equation}
where $a_{ij}=a_{ji}>0$ are coefficients associated with node $i$, $g_{i}(y_i(t))$ is the gradient of $f_{i}$ at the point $y_{i}(t)$.

The dynamical system \eqref{dyndist} can be written compactly as
\begin{equation}\label{ccm001}
\dot{y}(t)=-\mathbf{L}y(t)-G(y(t))
\end{equation}
where $\mathbf{L} = L \otimes I_{n}$ being $L$ the Laplacian matrix, and $G(y(t))$ is a succession of gradient functions $g_{i}(y_i(t))$. 

\subsection{Multiplex Networks}
\begin{figure}[t]
\begin{center}
\includegraphics[width=.4\textwidth]{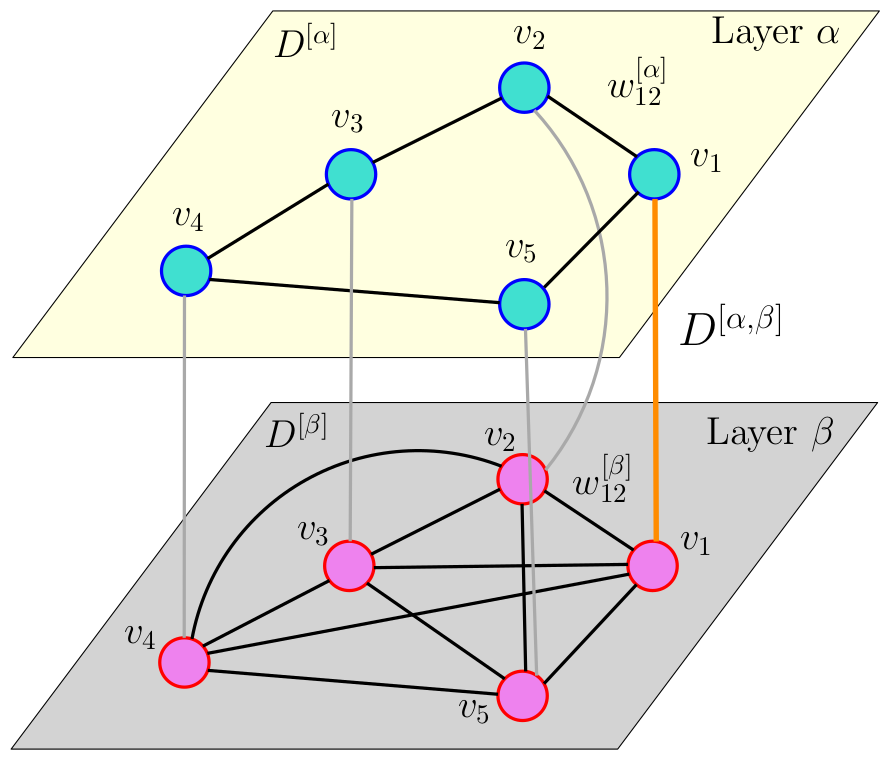}
\end{center}
\caption{Multiplex network with $M=2$ layers and $N=5$ nodes in each layer $\alpha$ and $\beta$ and its intra/inter-layer diffusion constants.}
\label{multi5ddd}
\end{figure}

A multiplex network is a subclass of a multilayer network, where nodes in different layers are mapped in a one-to-one correspondence, and interlinks exclusively connect to corresponding replica nodes~\cite{bianconi2018multilayer}. Multiplex networks are commonly used to represent relationships between a common set of nodes, where each layer corresponds to a specific type of interaction. There are two ways to represent a multiplex network. In the first representation, corresponding replica nodes are not distinguished, and interlinks are not explicitly used. In the alternative representation, the replica nodes are considered discernible agents, and interlinks are explicitly described.

In this paper, we focus on the alternative representation, which includes interlinks and treats corresponding replica nodes as distinguishable entities. This approach allows for a more explicit and detailed description of the multiplex network topology and facilitates the analysis of complex interactions between nodes in different layers~\cite{bianconi2018multilayer}. The multiplex network can be represented as $N$ nodes, denoted by $i=1,2,..., N$, and $M$ layers. Every node $i$ is associated with $M$ \emph{replica node}, $(i, \alpha)$, where $\alpha=1,2,...,M$ represents the identification of node $i$ in layer $\alpha$. We define the set of nodes $V = \{ i \;|\; i \in \{ 1, 2, ..., N \} \}$ and the set of layer $\mathcal{M}=\{ \alpha \;|\; \alpha \in \{ 1, 2, ..., M \} \}$ for the multiplex network. 

A multilayer network can be represented by $M$ node sets $V_{\alpha}$, where each set represents the replicas of the nodes in $V$ for layer $\alpha$. To distinguish between the different replica nodes, the multiplex network is formed by the tuple $(Y, \mathbf{G}, \mathcal{G})$ being $\mathbf{G}$ given by $\mathbf{G} = (G_{1}, G_{2}, ..., G_{\alpha}, ..., G_{M})$, and $G_{\alpha}=(V_{\alpha}, E_{\alpha})$ being the network in layer $\alpha$ and the interactions between different layers in a multiplex network are characterized by an $M\times M$ network $\mathcal{G}$. Each element of the list, denoted as $\mathcal{G}_{\alpha,\beta}$, corresponds to a multiplex network that consists of node sets $V_{\alpha}$ and $V_{\beta}$, and link set $E_{\alpha,\beta}$. Interlinks are defined as links connecting nodes in layer $\alpha$ to nodes in layer $\beta$. In this particular representation, it is exclusively the replica nodes in layer $\alpha$ that are connected to their corresponding replica nodes in layer $\beta$ through the interlinks in $E_{\alpha,\beta}= \{ [(i,\alpha),(i,\beta)]| \; i \in \{ 1, 2, ..., N \}\}$. 

Finally, a supra-adjacency matrix $\mathcal{A}$ with entries $\mathcal{A}_{i\alpha,j\beta}$ and dimension $N\cdot M \times N\cdot M$ can be defined to indicate both the intralinks and the interlinks as
\begin{equation}
\mathcal{A}_{i\alpha,j\beta} = 
\begin{cases}
a_{ij}^{[\alpha]}\quad \text{if} \quad \alpha = \beta, \\
\delta (i,j)\quad \text{if} \quad \alpha \neq \beta,
\end{cases} 
\end{equation}
where $a_{ij}^{[\alpha]}$ are the matrix elements of the $M$ different adjacency matrices $A^{[\alpha]}$ in layer $\alpha$, and $\delta (i,j)$ is defined as the Kronecker delta. For example, entries $a_{ij}^{[\alpha]}$ of an undirected multiplex network with weights are as follows
\begin{equation}
a_{ij}^{[\alpha]} = \begin{cases}
w _{ij}^{[\alpha]}\quad \text{if $i$-node is connected to $j$-node in layer $\alpha$}\\
\; \qquad\text{and weight }\; \; w _{ij}^{[\alpha]}\\
0 \qquad \text{otherwise.}
\end{cases} 
\end{equation}

Notice that multiplex networks can represent interactions among diverse sets of nodes, or they can describe interactions within the same set of nodes, with each layer meaning a distinct type of interaction. In scenarios such as interdependent power grid and communication infrastructures, each power plant relies on a specific node in the communication network for monitoring dynamics, or connectivity between two stations in a city through various transportation routes like bus, train, and metro \cite{buldyrev2010catastrophic}, \cite{bianconi2018multilayer}. A multiplex network with $M=2$ layers and $N=5$ nodes in each layer is shown in Fig. \ref{multi5ddd}. In the next section, it is presented the main concepts of diffusion dynamics on multiplex networks and their different diffusion parameters such as interlayer and intralayer coefficients.

\subsection{Diffusion Processes on Multiplex Networks}

Diffusion processes play a pivotal role in multiplex networks, as they ensure communication between nodes across different layers and within nodes of the same layer. This configuration might represent, for instance, diffusion dynamics occurring over multimodal transportation networks, where individuals diffuse within and between various layers, such as bus, subway, and so on \cite{gomez2013diffusion}. In interdependent networks such as multienergy systems, where a power network and a gas network diffuse energy to produce different types of power, the diffusion process is often influenced by factors such as technological interdependencies, regulatory frameworks, economic considerations, and user behavior. Various authors, such as~\cite{biggs1993algebraic,  curtis1991dirichlet}, have studied the framework of the $\omega$-Laplace equation on graphs, considering a diffusion equation on electric networks and represents a significant problem in the field of network analysis~\cite{gilbert2016diffuse}.

Diffusion processes within each layer and across a different layer can be described by a dynamical diffusion state $y_{i}^{[\alpha]}(t)$ associated with each replica node $(i,\alpha)$ of the multiplex network with $i=1,2,...,N$ and $\alpha = 1,2,...,M$. We assume that both intralinks and interlinks can sustain the diffusion dynamics. 

The intralayer diffusion is determined by the diffusion constant $D^{[\alpha]}$. Similarly, the interlayer diffusion is dictated by the diffusion constants $D^{[\alpha,\beta]}=D^{[\beta,\alpha]}$. The diffusion across different links of the same layer can be modulated by a weight $w _{ij}^{[\alpha]}$ associated with each undirected link from the replica node $(i,\alpha)$ to the replica node $(j,\alpha)$. Fig. \ref{multi5ddd} presents the main diffusion constant parameters on multiplex networks.  With this notation, the general diffusion equation in a multiplex network is given by~\cite{gomez2013diffusion}, \cite{bianconi2018multilayer}
\begin{equation}
\dot{y}_{i}^{[\alpha]} =  D^{[\alpha]}\sum _{j=1}^{N}w _{ij}^{[\alpha]} \left( y_{j}^{[\alpha]} - y_{i}^{[\alpha]} \right) +\sum _{\beta =1}^{M} D^{[\alpha,\beta]} \left( y_{i}^{[\beta]} - y_{i}^{[\alpha]} \right), 
\label{diffusion1}
\end{equation}
where the first term is the intralayer diffusion and the second term is the interlayer diffusion. Equation (\ref{diffusion1}) can be written as a general diffusion equation in an $(N, M)$ dimensional space with $y \in N\cdot M$ as a vector encoding the dynamical state of all replica nodes of the multiplex network as follows 

\begin{equation}\label{diffMP}
\dot{y}=-\mathcal{L}y,
\end{equation}
where $\mathcal{L}$ is the supra-Laplacian matrix with dimensions $(N\cdot M)\times (N\cdot M)$ with $M$ layers defined as 
 
\begin{equation}\label{supraL}
\mathcal{L} = \Lambda + \Delta
\end{equation}
where matrices $\Lambda$ and $\Delta$ in \eqref{supraL} are given by

\begin{equation}
\Lambda = \left(\begin{IEEEeqnarraybox*}[][c]{,c/c/c/c,}
D^{[1]}L^{[1]} & 0 & \cdots & 0\\
0 & D^{[2]}L^{[2]} & \cdots & 0\\
\vdots & \vdots & \ddots & \vdots \\ 
0 & 0 & \cdots & D^{[M]}L^{[M]}%
\end{IEEEeqnarraybox*}\right)
\label{lambda1}
\end{equation}
and

\begin{equation}\label{delta1}
\begin{tiny}
\Delta = \left(\begin{IEEEeqnarraybox*}[][c]{,c/c/c/c,}
\sum _{\beta \neq 1}D^{[1,\beta]}I_{N\times N} & -D^{[1,2]}I_{N\times N} & \cdots & -D^{[1,M]}I_{N\times N}\\
-D^{[2,1]}I_{N\times N} & \sum _{\beta \neq 2}D^{[2,\beta]}I_{N\times N} & \cdots & -D^{[2,M]}I_{N\times N}\\
\vdots & \vdots & \ddots & \vdots \\ 
-D^{[M,1]}I_{N\times N} & -D^{[M,2]}I_{N\times N} & \cdots & \sum _{\beta \neq M}D^{[M,\beta]}I_{N\times N}%
\end{IEEEeqnarraybox*}\right) ,
\end{tiny}
\end{equation}
respectively, where $I_{N\times N}$ is the $N\times N$ identity matrix. In matrix (\ref{lambda1}) $L^{[\alpha]}$ indicates the Laplacian matrix in each layer whose entries are $L _{ij}^{[\alpha]} = s _{i}^{[\alpha]}\delta _{ij}-w _{ij}^{[\alpha]}$,
being $s _{i}^{[\alpha]}$ the strength of the replica node $(i,\alpha)$ defined by $s _{i}^{[\alpha]} = \sum _{j}w _{ij}^{[\alpha]}$, and the dynamical vector $y$ can be written as $y = \left(y^{[1]} \; y^{[2]}\; \ldots  y^{[M]}\right)^{\top}$, where the $y^{[\alpha]}$ indicates the $N$ column vector of elements $y^{[\alpha]}_{i}$ with $i=1,2,...,N$. In an undirected multiplex network with diffusion constant $D^{[\alpha ,\beta ]}=D^{[\beta ,\alpha ]}$, we assume that the multiplex network, including its interlinks, is connected since the supra-Laplacian matrix $\mathcal{L}$ is symmetric and semi-positive definite with real eigenvalues $\Lambda _{n}>0$~\cite{gomez2013diffusion}. We also assume that it is possible to reach any other node by following a combination of intralinks and interlinks from a replica node.

\section{Problem Statement}

We consider the following distributed optimization problem subject to a multilayer network interaction, which might be interpreted as an interdependent infrastructure, as it is shown in Fig. \ref{multi5ddd}. 

\begin{equation}
\min _{x \in \mathbb{R}}  f(x)= \sum _{\alpha = 1}^{M} \sum _{i=1}^{N}f_{i}^{[\alpha]}(x),
\label{opproblem1}
\end{equation}
where $f_{i}^{[\alpha]}:\mathbb{R} \rightarrow \mathbb{R}$ is assumed to be Lipschitz differentiable convex cost function exclusive to agent $(i,\alpha)$ in layer $\alpha$. It is important to note that the cost of each agent is dependent on the global variable $x$ across all layers. Note also that we have assumed that the decision variable $x$ is in $\mathbb{R}$, but it can be extended to $\mathbb{R}^m$ using the definition of the Kronecker product similar to single layer distributed optimization \eqref{ccm001}. The interaction among replica nodes and between layers of networks can be a cyberphysical multiplex network. To guarantee there exists a unique optimal solution $x^*$ to Problem \eqref{opproblem1}, we assume the following conditions on the gradients.

\begin{assumption}
      The gradients of the multilayer cost functions $f_i^{[\alpha]}$ are Lipschitz continuous in every layer and inter layers $\alpha$
    \begin{equation*}
    ||\nabla f_i^{[\alpha]}(x)-\nabla f_i^{[\alpha]}(x')||_2 \leq L||x-x'||_2, \quad \forall i \in V, \forall \alpha \in \mathcal{M}.
    \end{equation*}
\end{assumption}

In the distributed multilayer optimization problem (\ref{opproblem1}), we should consider the heterogeneous relationship between nodes. In each layer, the weighted interaction might be different, i.e., $w_{ij}^{[\alpha]}\neq w_{ij}^{[\beta]}$. The heterogeneity of each layer can also be included in the interaction model using the intralayer diffusion constant $D^{[\alpha]}$. Also, the interlayer diffusion constants allow us to include several possible different interactions between layers; for instance, the constant $D^{[\alpha, \beta]}$ can be understood as an energy conversion factor between different energy networks (See Fig. \ref{multi5ddd} for a description of the interaction model of multiplex networks and its dynamics). The constraints governing the diffusion within each network ($w_{ij}^{[\alpha]}$, $D^{[\alpha]}$) and between networks ($D^{[\alpha, \beta]}$) are included into the supra-Laplacian \eqref{supraL}, extending the traditional Laplacian to incorporate more challenging higher-order interactions between nodes and layers. First, we assign each agent an estimation $y_{i}^{[\alpha]} \in \mathbb{R}$ for the variable $x \in \mathbb{R}$ to develop a distributed solution to address the optimization problem (\ref{opproblem1}). Due to this multiplex network interaction, we propose to solve the following equivalent distributed optimization problem

\begin{eqnarray}\label{opproblem2}
&&\min _{y\in \mathbb{R}^{N\cdot M}}\tilde{f}(y)= \sum _{\alpha = 1}^{M} \sum _{i=1}^{N}f_{i}^{[\alpha]}(y_{i}^{[\alpha]}) \nonumber \\
&&\text{s.t.} \nonumber \\
&& \quad D^{[\alpha]}\sum_{j=1}^{N}w_{ij}^{[\alpha]}\left( y_{i}^{[\alpha]} - y_{j}^{[\alpha]} \right)=\sum _{\beta =1}^{M} D^{[\alpha,\beta]} \left( y_{i}^{[\beta]} - y_{i}^{[\alpha]} \right), \nonumber\\
&& \qquad  \forall \; \alpha \in \mathcal{M}.
\end{eqnarray}

To illustrate the challenges of the multiplex network optimal consensus, we observe that Problem \eqref{opproblem2} should satisfy the Karush-Kunh Tucker (KKT) sufficient and necessary conditions for the global optimal solution $y_i^{[\alpha]*}=x^*$ for all $\alpha \in \mathcal{M}$ and $i\in V$ as follows

\begin{equation*}
    \sum _{\alpha = 1}^{M} \sum _{i=1}^{N}\nabla_{y_i^{[\alpha]}}f_{i}^{[\alpha]}(y_{i}^{[\alpha]})=0
\end{equation*}
and for all $\alpha \in \mathcal{M}$
\begin{equation}\label{consmulti}
    D^{[\alpha]}\sum_{j=1}^{N}w_{ij}^{[\alpha]}\left( y_{i}^{[\alpha]} - y_{j}^{[\alpha]} \right)=\sum_{\beta =1}^{M} D^{[\alpha,\beta]} \left( y_{i}^{[\beta]} - y_{i}^{[\alpha]} \right).
\end{equation}

It is observed that \eqref{consmulti} should be satisfied to guarantee the convergence to a global solution. From \eqref{consmulti}, we can obtain the conditions for multiplex consensus as follows. First, for the intralayer consensus

\begin{equation}
    y_i^{[\alpha]}=\frac{1}{D^{[\alpha]}\sum_{j=1}^{N}w_{ij}}\sum_{j=1}^{N}w_{ij}^{[\alpha]} y_{j}^{[\alpha]}, 
\end{equation}
and for the interlayer consensus, we have

\begin{equation}
    y_i^{[\alpha]}=\frac{1}{\sum_{\beta =1}^{M} D^{[\alpha,\beta]}}\sum_{\beta =1}^{M} D^{[\alpha,\beta]} y_{i}^{[\beta]}.
\end{equation}

Furthermore, notice that Problem \eqref{opproblem2} is equivalent to the intralayer constraint condition

\begin{equation}
    y_i^{[\alpha]}=\frac{1}{D^{[\alpha]}\sum_{j=1}^{N}w_{ij}}\left(\sum_{j=1}^{N}w_{ij}^{[\alpha]} y_{j}^{[\alpha]}- \sum _{\alpha = 1}^{M} \sum _{i=1}^{N}\nabla_{y_i^{[\alpha]}}f_{i}^{[\alpha]}(y_{i}^{[\alpha]})\right),
\end{equation}
and the interlayer constraint condition

\begin{equation}
    y_i^{[\alpha]}=\frac{1}{\sum_{\beta =1}^{M} D^{[\alpha,\beta]}}\left(\sum_{\beta =1}^{M} D^{[\alpha,\beta]} y_{i}^{[\beta]} - \sum _{\alpha = 1}^{M} \sum _{i=1}^{N}\nabla_{y_i^{[\alpha]}}f_{i}^{[\alpha]}(y_{i}^{[\alpha]})\right).
\end{equation}

Therefore, every agent that adopts the following dynamic satisfying simultaneously the intra- and interlayer constraints

\begin{eqnarray}
    \dot{y}_{i}^{[\alpha]}=\left(\sum_{j=1}^{N}w_{ij}^{[\alpha]} y_{j}^{[\alpha]}- \sum _{\alpha = 1}^{M} \sum _{i=1}^{N}\nabla_{y_i^{[\alpha]}}f_{i}^{[\alpha]}(y_{i}^{[\alpha]})\right) \nonumber\\
    +\left(\sum_{\beta =1}^{M} D^{[\alpha,\beta]} y_{i}^{[\beta]} - \sum _{\alpha = 1}^{M} \sum _{i=1}^{N}\nabla_{y_i^{[\alpha]}}f_{i}^{[\alpha]}(y_{i}^{[\alpha]})\right) \nonumber\\
    -\left(D^{[\alpha]}\sum_{j=1}^{N}w_{ij}y_i^{[\alpha]}+\sum_{\beta =1}^{M} D^{[\alpha,\beta]}y_i^{[\alpha]}\right)
\end{eqnarray}
will converge to the optimal solution $y^*$, with $y_i^{[\alpha]}=y_j^{[\alpha]}=y^*$ for all $i,j \in V$ and $y_i^{\alpha}=y_i^{\beta}=y^*$ for all $\alpha, \beta \in \mathcal{M}$, and since this solution satisfies the KKT conditions for Problem \ref{opproblem1} then $x^*=y^*$. Notice that this dynamic for each agent is not distributed, so we need to propose a distributed dynamic for each agent able to estimate the global gradient $\sum _{\alpha = 1}^{M} \sum _{i=1}^{N}\nabla_{y_i^{[\alpha]}}f_{i}^{[\alpha]}(y_{i}^{[\alpha]})$. It has been observed that the distributed algorithm $y=-\nabla \tilde{f}(y)-Ly$ cannot converge to the optimal solution since local gradients are generally different \cite{kia2015dynamic}. Considering this observation, in the next section, we propose the concept of the augmented supra-Langragian for including the implicit gradient tracking for the intralayer and interlayer gradients.   

First, we proceed to solve the following equivalent supra-Laplacian-based consensus-constrained problem

\begin{eqnarray}\label{opproblem3}
&&\min _{y\in \mathbb{R}^{N\cdot M}}\tilde{f}(y)= \sum _{\alpha = 1}^{M} \sum _{i=1}^{N}f_{i}^{[\alpha]}(y_{i}^{[\alpha]}) \nonumber \\
&&\text{s.t. }\quad \mathcal{L}y =  0 _{N\cdot M}.
\end{eqnarray}
where the supra-Laplacian constraint $0 _{N\cdot M}=\mathcal{L}y$ is a multiplayer consensus constraint that assures that the local estimate for each agent converges to the same point. 

In the next sections, we propose two saddle-point dynamics algorithms to solve the distributed optimization problem with supra-Laplacian-based constraints. 

\section{Distributed Saddle-Point Dynamics for Multiplex Networks}
In this section, we introduce the distributed continuous-time optimization algorithm for multiplex networks based on saddle-point dynamics of the supra-Lagrangian with supra-Laplacian constraints. Considering the constrained structure of the multiplex network optimization problem \eqref{opproblem3}, we extend the saddle-point dynamics for continuous-time single-layer optimization to the multiplex case using its distributed implementation.

For Problem \eqref{opproblem3}, we propose the following augmented supra-Lagrangian function $\mathcal{L}_{a}: \mathbb{R}^{N\cdot M}\times \mathbb{R}^{N\cdot M}\rightarrow \mathbb{R}$ 
\begin{equation}
\mathcal{L}_{a}(y,\lambda) = \tilde{f}(y)+\lambda ^{T}\mathcal{L}y +\frac{1}{2}y^{T}\mathcal{L}y,
\label{augmented1}
\end{equation}
we can obtain the optimal solution of \eqref{opproblem3} by solving its corresponding saddle-point problem 
\begin{equation*}
    \min_y\max_{\lambda}\mathcal{L}_a(y,\lambda),
\end{equation*}
and the associated saddle-point dynamics yields
\begin{equation}
\dot{y}=-\nabla_{y}\mathcal{L}_{a}(y,\lambda)=-\nabla_{y} \tilde{f}(y)-\mathcal{L}y-\mathcal{L}\lambda
\label{saddleflow1}
\end{equation}
\begin{equation}
\dot{\lambda}=\nabla_{\lambda}\mathcal{L}_{a}(y,\lambda)= \mathcal{L}y,
\label{saddleflow2}
\end{equation}
which leads to the following dynamics for each agent $i$ on each layer $\alpha$
\begin{eqnarray}
    \dot{y}_{i}^{[\alpha]} = -\nabla_{y_i^{[\alpha]}}f_i^{[\alpha]} +D^{[\alpha]}\sum _{j=1}^{N}w _{ij}^{[\alpha]} \left( y_{j}^{[\alpha]} - y_{i}^{[\alpha]} \right) \nonumber\\
    +\sum _{\beta =1}^{M} D^{[\alpha,\beta]} \left( y_{i}^{[\beta]} - y_{i}^{[\alpha]} \right)\nonumber\\
    +D^{[\alpha]}\sum _{j=1}^{N}w _{ij}^{[\alpha]} \left( \lambda_{j}^{[\alpha]} - \lambda_{i}^{[\alpha]} \right) \nonumber\\
    +\sum _{\beta =1}^{M} D^{[\alpha,\beta]} \left( \lambda_{i}^{[\beta]} - \lambda_{i}^{[\alpha]} \right)
\label{saddledyn}
\end{eqnarray}

\begin{remark}
    An important feature of the proposed saddle-point dynamics \eqref{saddledyn} is that the augmented term $\frac{1}{2}y^{T}\mathcal{L}y$ of $\mathcal{L}_a$ allows the convergence without the strict convexity of $f_i^{\alpha}$. As it has been mentioned and can be observed in Fig. \ref{control31}, the augmented term introduces a derivative feedback term in the saddle-point dynamics that relaxes the strict convexity property to only convexity.   
\end{remark}

Fig. \ref{control31} illustrates a block-diagram representation of the saddle-point flow (\ref{saddleflow1}) - (\ref{saddleflow2}). The first three upper blocks refer to the primal gradient dynamics. The following two, are taking the multiplex damping via augmentation. The final two bottom blocks are the dual integrator dynamics. The supra-Laplacian blocks, in the input and output of the dual integrator dynamics, are the skew-symmetric multiplex coupling.

The following lemma describes the optimality conditions for the supra-Laplacian-based Lagrangian (\ref{augmented1}) and the equilibrium of the multiplex saddle-point dynamics (\ref{saddleflow1})-(\ref{saddleflow2}).

\begin{lemma}\label{lem1}
Suppose we have a symmetric supra-Laplacian $\mathcal{L}=\mathcal{L}^{T}\in \mathbb{R}^{N\cdot M \times N\cdot M}$ associated with a weighted, undirected, and connected multiplex network. Consider convexity of the function $f_{i}^{[\alpha]}$ for all $(i,\alpha)$, with $i\in V=\{ 1,...,N \}$ and $\alpha\in \mathcal{M}=\{ 1,...,M\}$ in supra-Lagrangian (\ref{augmented1}). The following conditions follow:

\begin{itemize}
    \item[1] If we assume that $(y^{\star},\lambda ^{\star})\in \mathbb{R}^{N\cdot M}\times \mathbb{R}^{N\cdot M}$ is a saddle point of (\ref{augmented1}). Then, $(y^{\star},\lambda ^{\star}+\gamma 1_{N\cdot M})$ is also a minimax point for any $\gamma \in \mathbb{R}$.
    \item[2] Suppose $(y^{\star},\lambda ^{\star})\in \mathbb{R}^{N\cdot M}\times \mathbb{R}^{N\cdot M}$ is a saddle point of (\ref{augmented1}). Then, $y^{\star} = x^{\star}1_{N\cdot M}$ is the solution of primal problem (\ref{opproblem1}) with $x^{\star} \in \mathbb{R}$.
    \item[3] There exist saddle-point $(y^{\star},\lambda ^{\star})$ of the multiplex augmented Lagrangian (\ref{augmented1}) satisfying
    \begin{equation*}
\mathcal{L}\lambda ^{\star}+\nabla_y \tilde{f}(y^{\star})=0_{N\cdot M}.
\end{equation*}
\end{itemize}
\end{lemma}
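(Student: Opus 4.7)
The plan is to exploit three structural facts about the supra-Laplacian that are already available in the preliminaries: $\mathcal{L}$ is symmetric, positive semi-definite, and, since the multiplex network (with interlinks) is connected, has one-dimensional kernel spanned by $1_{N\cdot M}$. I will handle the three items in the order stated, reusing the computation of each step in the next.

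For item 1, the key observation is that $1_{N\cdot M}^{\top}\mathcal{L}=0$ by symmetry. Substituting $\lambda^{\star}+\gamma 1_{N\cdot M}$ into (\ref{augmented1}) gives $\mathcal{L}_{a}(y,\lambda^{\star}+\gamma 1_{N\cdot M})=\mathcal{L}_{a}(y,\lambda^{\star})+\gamma 1_{N\cdot M}^{\top}\mathcal{L}y=\mathcal{L}_{a}(y,\lambda^{\star})$, so the saddle-point inequalities are preserved verbatim. This already signals that the dual variable is determined only modulo the kernel of $\mathcal{L}$, which will be important in item 3.

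For item 2, I would write out the first-order saddle-point conditions $\nabla_{y}\mathcal{L}_{a}(y^{\star},\lambda^{\star})=0$ and $\nabla_{\lambda}\mathcal{L}_{a}(y^{\star},\lambda^{\star})=0$ using (\ref{saddleflow1})--(\ref{saddleflow2}). The second yields $\mathcal{L}y^{\star}=0$, and connectedness of the multiplex graph forces $y^{\star}=x^{\star}1_{N\cdot M}$ for some $x^{\star}\in\mathbb{R}$. Then the first condition reduces to $\nabla_{y}\tilde{f}(y^{\star})+\mathcal{L}\lambda^{\star}=0$ (the augmentation term vanishes since $\mathcal{L}y^{\star}=0$). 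Left-multiplying by $1_{N\cdot M}^{\top}$ and using $1_{N\cdot M}^{\top}\mathcal{L}=0$ gives $\sum_{\alpha,i}\nabla f_{i}^{[\alpha]}(x^{\star})=0$, which is the unconstrained optimality condition for (\ref{opproblem1}); convexity of $f$ then promotes this stationary point to a global minimizer. Here the main subtlety I expect is being careful that $\nabla_{y_{i}^{[\alpha]}}\tilde{f}(y^{\star})=\nabla f_{i}^{[\alpha]}(x^{\star})$ since each $f_{i}^{[\alpha]}$ depends only on its own coordinate of $y$.

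For item 3, I would note that (\ref{opproblem3}) is a convex program with a purely linear equality constraint, so Slater's condition is automatic and strong duality holds; hence the ordinary Lagrangian $\tilde{f}(y)+\lambda^{\top}\mathcal{L}y$ admits a saddle point $(y^{\star},\lambda^{\star})$. Because $\mathcal{L}y^{\star}=0$ at any such point, both the augmentation term $\tfrac{1}{2}y^{\top}\mathcal{L}y$ and its gradient $\mathcal{L}y$ vanish there, so $(y^{\star},\lambda^{\star})$ is also a saddle point of (\ref{augmented1}), and the stationarity condition $\nabla_{y}\mathcal{L}_{a}=0$ reads $\mathcal{L}\lambda^{\star}+\nabla_{y}\tilde{f}(y^{\star})=0_{N\cdot M}$. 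The delicate point I anticipate is existence of the dual multiplier: since $\mathcal{L}$ has a nontrivial kernel the constraint is not of full row rank, but thanks to item 1 the ambiguity lives only along $1_{N\cdot M}$, so one can pick a representative $\lambda^{\star}\in(\ker\mathcal{L})^{\perp}=\mathrm{range}(\mathcal{L})$. The compatibility condition for solvability of $\mathcal{L}\lambda^{\star}=-\nabla_{y}\tilde{f}(y^{\star})$ is precisely $1_{N\cdot M}^{\top}\nabla_{y}\tilde{f}(y^{\star})=0$, which was established in item 2; this closes the argument.
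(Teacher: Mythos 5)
Your proposal is correct and follows exactly the route the paper's proof gestures at --- the paper's own proof is a one-line appeal to the KKT conditions and the definition of a saddle point, whereas you actually carry out that argument: the invariance of $\mathcal{L}_{a}$ under $\lambda\mapsto\lambda+\gamma 1_{N\cdot M}$ via $1_{N\cdot M}^{\top}\mathcal{L}=0$, the identification $\ker\mathcal{L}=\mathrm{span}(1_{N\cdot M})$ from connectedness to get $y^{\star}=x^{\star}1_{N\cdot M}$, and the range condition $1_{N\cdot M}^{\top}\nabla_{y}\tilde{f}(y^{\star})=0$ to solve $\mathcal{L}\lambda^{\star}=-\nabla_{y}\tilde{f}(y^{\star})$ for existence. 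The only cosmetic point is that in item 3 the identity $\sum_{\alpha,i}\nabla f_{i}^{[\alpha]}(x^{\star})=0$ should be cited as the unconstrained optimality condition for the minimizer $x^{\star}$ of (\ref{opproblem1}) (which the paper assumes to exist) rather than as a consequence of item 2, since item 2 presupposes the saddle point whose existence you are establishing; with that rewording the argument is complete.
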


\begin{proof}
It follows from the Karush-Kuhn Tucker conditions applied to the Lagrangian function (\ref{augmented1}) and the definition of a saddle-point in convex optimization \cite{bertsekas2009convex}.   
\end{proof}

Now, it is presented the convergence analysis of the proposed saddle-point dynamics for multiplex networks (\ref{saddleflow1})-(\ref{saddleflow2}). The main result for convergence analysis of the proposed primal-dual saddle-point dynamics is presented in the following theorem.

\begin{figure}[t]
\begin{center}

\includegraphics[width=.4\textwidth]{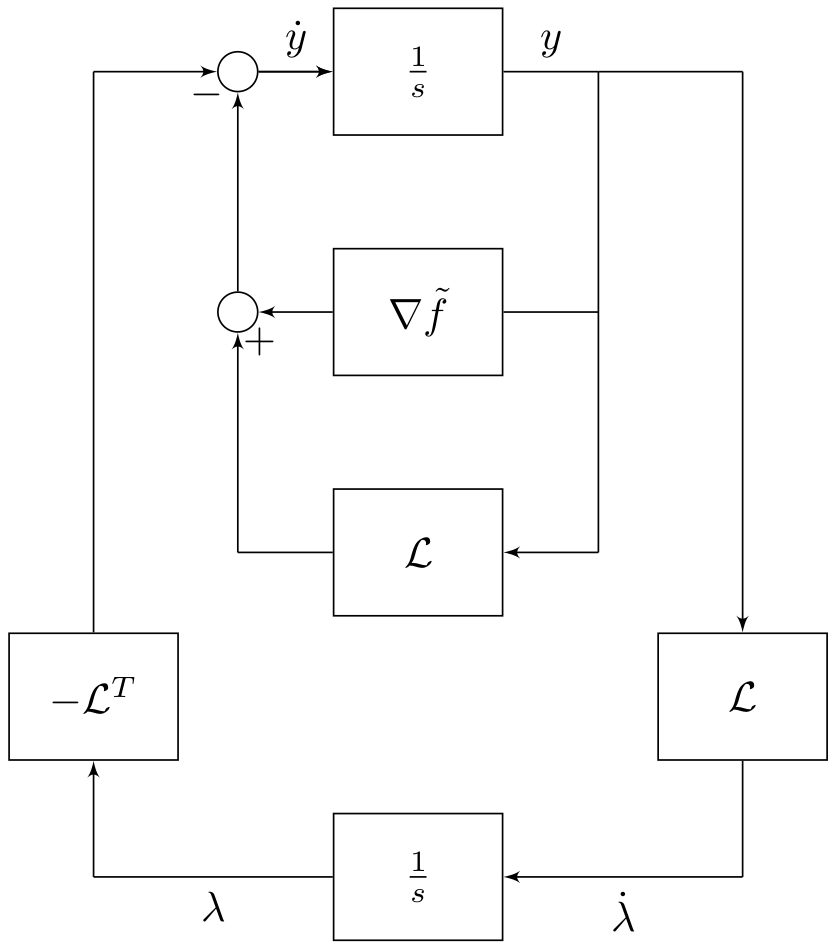}

\end{center}
\caption{Block-diagram representation of the saddle-point dynamics (\ref{saddleflow1}) - (\ref{saddleflow2}) for the supra-Laplacian saddle-point dynamics.}
\label{control31}
\end{figure}

\begin{theorem}\label{thm2}
Suppose we have a twice continuously differentiable convex function $\tilde{f}: \mathbb{R}^{N\cdot M} \rightarrow \mathbb{R}$. The convergence to a unique saddle point $(y^{\star}, \lambda^{\star})$ of every trajectory $(y(t), \lambda (t))$ of the multiplex saddle-point dynamics (\ref{saddleflow1})-(\ref{saddleflow2}) is determined by the following conditions:

\begin{itemize}
    \item[1] $y^{\star} = x^{\star}1_{N\cdot M}$ is an optimizer of the primal optimization problem (\ref{opproblem1}) with $x^{\star} \in \mathbb{R}$.
    \item[2] $\lambda^{\star} = \bar{\lambda}+\text{average}(\lambda _{0})1_{N\cdot M}$, where $\text{average}(\lambda _{0})$ is the average of the initial conditions for the vector $\lambda(0)$, and $\bar{\lambda}\perp 1_{N\cdot M}$ satisfies 
\begin{equation*}
\mathcal{L}\bar{\lambda}+\nabla_y \tilde{f}(y^{\star})=0_{N\cdot M}
\end{equation*}
\end{itemize}
\end{theorem}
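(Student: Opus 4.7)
My plan is to prove convergence by constructing a quadratic Lyapunov function, showing it is non-increasing along trajectories, and then invoking LaSalle's invariance principle together with a conservation law to pin down the exact limit.

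First I would verify that any pair $(y^{\star},\lambda^{\star})$ with $y^{\star}=x^{\star}\mathbf{1}_{NM}$ and $\mathcal{L}\lambda^{\star}+\nabla\tilde f(y^{\star})=0_{NM}$ is an equilibrium of \eqref{saddleflow1}--\eqref{saddleflow2}: $\mathcal{L}y^{\star}=0$ since $y^{\star}\in\mathrm{null}(\mathcal{L})=\mathrm{span}(\mathbf{1}_{NM})$ for a connected multiplex, hence $\dot y=0$; and $\dot\lambda=\mathcal{L}y^{\star}=0$. Existence of such a $\lambda^{\star}$ is guaranteed by Lemma \ref{lem1}. Next, I would note the conservation law $\mathbf{1}_{NM}^{\top}\dot\lambda=\mathbf{1}_{NM}^{\top}\mathcal{L}y=0$, so that $\mathrm{average}(\lambda(t))=\mathrm{average}(\lambda_0)$ for all $t$. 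Decomposing along $\mathrm{span}(\mathbf{1}_{NM})$ and its orthogonal complement singles out the unique candidate $\lambda^{\star}=\bar\lambda+\mathrm{average}(\lambda_0)\mathbf{1}_{NM}$ with $\bar\lambda\perp\mathbf{1}_{NM}$.

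I then define the candidate Lyapunov function
\begin{equation*}
V(y,\lambda)=\tfrac12\|y-y^{\star}\|^{2}+\tfrac12\|\lambda-\lambda^{\star}\|^{2}.
\end{equation*}
Differentiating along the dynamics, exploiting $\mathcal{L}y^{\star}=0$, the symmetry of $\mathcal{L}$, and the equilibrium identity $\nabla\tilde f(y^{\star})=-\mathcal{L}\lambda^{\star}$, the cross terms $-(y-y^{\star})^{\top}\mathcal{L}(\lambda-\lambda^{\star})$ and $(\lambda-\lambda^{\star})^{\top}\mathcal{L}(y-y^{\star})$ cancel, yielding
\begin{equation*}
\dot V=-(y-y^{\star})^{\top}\bigl[\nabla\tilde f(y)-\nabla\tilde f(y^{\star})\bigr]-(y-y^{\star})^{\top}\mathcal{L}(y-y^{\star})\le 0,
\end{equation*}
with the first term non-positive by convexity of $\tilde f$ and the second by positive semidefiniteness of $\mathcal{L}$. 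Radial unboundedness of $V$ yields boundedness of trajectories.

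The main obstacle is the last step: showing that the largest invariant set $\mathcal{S}$ in $\{\dot V=0\}$ reduces to $\{(y^{\star},\lambda^{\star})\}$, since the augmented quadratic term gives only marginal (not strict) dissipation and $\tilde f$ is convex but not assumed strictly convex. On $\mathcal{S}$, the second non-positive term vanishes, so $\mathcal{L}(y-y^{\star})=0$, forcing $y=y^{\star}+c(t)\mathbf{1}_{NM}$; similarly $\mathcal{L}y=0$ forces $\dot\lambda=0$, so $\lambda$ is constant. The remaining condition $(y-y^{\star})^{\top}[\nabla\tilde f(y)-\nabla\tilde f(y^{\star})]=0$ reduces to $c\,[g'(c)-g'(0)]=0$ where $g(c):=\tilde f(y^{\star}+c\mathbf{1}_{NM})=f(x^{\star}+c)$. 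Since $x^{\star}$ is the unique minimizer of $f$ and $g$ is convex, $g'(c)=0$ only at $c=0$, so $c\equiv 0$ and $y\equiv y^{\star}$. Substituting into $\dot y=0$ gives $\mathcal{L}(\lambda-\lambda^{\star})=0$, so $\lambda-\lambda^{\star}\in\mathrm{span}(\mathbf{1}_{NM})$; the conservation of $\mathrm{average}(\lambda)$ then matches averages and forces $\lambda=\lambda^{\star}$. LaSalle's invariance principle then concludes $(y(t),\lambda(t))\to(y^{\star},\lambda^{\star})$, establishing both items of the theorem.
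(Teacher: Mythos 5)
Your proposal is correct and follows essentially the same route as the paper: the quadratic Lyapunov function $V=\tfrac12\|y-y^{\star}\|^{2}+\tfrac12\|\lambda-\lambda^{\star}\|^{2}$, cancellation of the skew cross-terms via symmetry of $\mathcal{L}$, LaSalle's invariance principle, and the conservation of $\mathrm{average}(\lambda(t))$ to select the particular $\lambda^{\star}$. The one place where you genuinely improve on the paper's argument is the invariant-set step: the paper asserts $\dot V<0$ by appealing to strict convexity and then simply declares $E=\{\tilde y=0\}$, which is not justified when $\tilde f$ is only convex (the very case the augmented term is meant to handle, per Remark~1). Your reduction of $\dot V=0$ to $\mathcal{L}(y-y^{\star})=0$ plus the scalar condition $c\,[g'(c)-g'(0)]=0$ with $g(c)=f(x^{\star}+c)$, and the use of uniqueness of the minimizer to force $c=0$, closes that gap cleanly; the subsequent deduction $\mathcal{L}(\lambda-\lambda^{\star})=0$ followed by matching averages is exactly what is needed to pin down $\lambda^{\star}$. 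Both arguments buy the same conclusion, but yours makes explicit why convexity alone suffices.
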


\begin{proof}
See Appendix \ref{Proofthm2}.
\end{proof}
\section{Distributed Gradient Descent for Multiplex Networks}   
In this section, as a variation to the saddle-point dynamics \eqref{saddleflow1}-\eqref{saddleflow2} to reduce the number of parameters to design, we propose a distributed descent algorithm for multiplex networks. Consider the saddle-point dynamics \eqref{saddleflow1}-\eqref{saddleflow2} represented in block-diagram in Fig. \ref{control31}, it is possible to reduce the dynamics  obtaining as a result the corresponding distributed gradient descent flow with supra-Laplacian defined as 
\begin{equation}
K_{1}\dot{y}=-\nabla_y \tilde{f}(y)-\rho \mathcal{L}y .
\label{distributedgflow1}
\end{equation}

Based on the soft-penalty method \cite{bertsekas2009convex}, \cite{yang2016multi}, this distributed supra-Laplacian gradient flow (\ref{distributedgflow1}) can be obtained from  the multiplex optimization problem
\begin{equation}
\min _{y\in \mathbb{R}^{N\cdot M}} \sum _{\alpha =1}^{M} \sum _{i=1}^{N} f_{i}^{[\alpha]}(y_{i}^{[\alpha]}) + \frac{1}{2}\rho y^{T}\mathcal{L}y.
\label{opproblem5}
\end{equation}

Notice that the soft-penalty term $\frac{1}{2}\rho y^{T}\mathcal{L}y$ aims to enforce supra-Laplacian consensus-constraint. Although the minimizer of (\ref{opproblem5}) differs from that of the distributed optimization problem (\ref{opproblem3}), the following outcome demonstrates that if the gains are time-varying, the flow (\ref{distributedgflow1}) can achieve convergence to the optimal solution of problem (\ref{opproblem3}).  


Consider the dynamical system with a time-varying positive gain $\varsigma:\mathbb{R}_{\geq 0}\rightarrow \mathbb{R}_{>0}$ 
\begin{equation}
\dot{y}=-\varsigma (t)\nabla_y \tilde{f}(y)-\mathcal{L}y.
\label{distgflow2}
\end{equation}

 Recall that $\varsigma(t)$ satisfies the persistence condition

\begin{equation}
\quad \int _{0}^{\infty}\varsigma (\tau)d\tau = \infty ,\quad \text{and}\, \lim _{t\rightarrow \infty}\varsigma (t) = 0.
\label{persistencec1}
\end{equation} 

In other words, the supra-Laplacian constraint in the distributed gradient flow (\ref{distgflow2}) becomes more influential than the gradient descent as time passes due to a decaying but non-integrable gain $\varsigma(t)$. The convergence of the distributed gradient flow for multiplex networks is given by the following theorem.

\begin{theorem}\label{thm3}
If $\tilde{f}:\mathbb{R}^{N\cdot M}\rightarrow \mathbb{R}$ is a radially unbounded, twice continuously differentiable convex function and $\varsigma: \mathbb{R}{\geq 0}\rightarrow \mathbb{R}{>0}$ satisfies the condition of persistence (\ref{persistencec1}), then the multiplex gradient dynamics (\ref{distgflow2}) converges to the single optimal solution $y^{\star}=x^{\star}1_{N\cdot M}$ for all $t \geq 0$.
\end{theorem}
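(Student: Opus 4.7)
The plan is to exploit the natural two-time-scale structure of the flow \eqref{distgflow2}: as $t$ grows, the gradient term is multiplied by a vanishing $\varsigma(t)$, so the consensus term $-\mathcal{L}y$ drives disagreements to zero on a fast time scale, while the vanishing-gain gradient term drives the average toward the minimizer on a slow time scale. I would orthogonally decompose $y=\bar{y}\,\mathbf{1}_{N\cdot M}+\tilde{y}$, with $\bar{y}:=(N\cdot M)^{-1}\mathbf{1}_{N\cdot M}^{\top}y$ and $\tilde{y}\in\mathbf{1}_{N\cdot M}^{\perp}$. Since $\mathcal{L}\mathbf{1}_{N\cdot M}=0$ (the supra-Laplacian of a connected multiplex network is positive semi-definite with simple zero eigenvalue, as recalled after \eqref{delta1}), this splits the flow into
\begin{equation*}
\dot{\bar{y}}=-\tfrac{\varsigma(t)}{N\cdot M}\mathbf{1}_{N\cdot M}^{\top}\nabla\tilde{f}(y),\qquad \dot{\tilde{y}}=-\mathcal{L}\tilde{y}-\varsigma(t)\,\Pi_{\perp}\nabla\tilde{f}(y),
\end{equation*}
where $\Pi_{\perp}:=I-(N\cdot M)^{-1}\mathbf{1}_{N\cdot M}\mathbf{1}_{N\cdot M}^{\top}$.

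First I would establish boundedness of trajectories. Using the radial unboundedness of $\tilde{f}$ and the soft-penalty interpretation in \eqref{opproblem5}, the function $U(y):=\tilde{f}(y)+\tfrac{1}{2\varsigma(t)}y^{\top}\mathcal{L}y$ is not directly suitable because $\varsigma$ depends on $t$, so instead I would work with $V(y):=\tilde{f}(y)-\tilde{f}(y^{\star})$ together with $\tfrac{1}{2}\tilde{y}^{\top}\tilde{y}$ and argue that $\dot{V}\le\varsigma(t)\|\nabla\tilde{f}(y)\|\|\dot{y}\|$-type bounds plus the spectral gap of $\mathcal{L}$ on $\mathbf{1}_{N\cdot M}^{\perp}$ force each piece to remain bounded; radial unboundedness then yields compactness of the trajectory.

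Next I would show that the disagreement vanishes: $\tilde{y}(t)\to 0$. Let $\lambda_{\min}^{\perp}>0$ denote the smallest nonzero eigenvalue of $\mathcal{L}$. Evaluating $\tfrac{d}{dt}\bigl(\tfrac{1}{2}\tilde{y}^{\top}\tilde{y}\bigr)$ gives
\begin{equation*}
\tfrac{d}{dt}\tfrac{1}{2}\|\tilde{y}\|^{2}\le -\lambda_{\min}^{\perp}\|\tilde{y}\|^{2}+\varsigma(t)\,\|\tilde{y}\|\,\|\Pi_{\perp}\nabla\tilde{f}(y)\|.
\end{equation*}
Boundedness of $y$ and Lipschitz continuity of $\nabla\tilde{f}$ (Assumption~1) bound $\|\Pi_{\perp}\nabla\tilde{f}(y)\|$ uniformly, and since $\varsigma(t)\to0$, a comparison lemma (or ISS-type argument) gives $\|\tilde{y}(t)\|\to0$.

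Finally I would analyze the average. Once $\tilde{y}\to0$, the slow dynamics read
\begin{equation*}
\dot{\bar{y}}=-\varsigma(t)\,\bar{g}(\bar{y})+\varsigma(t)\,\varepsilon(t),\qquad \bar{g}(x):=\tfrac{1}{N\cdot M}\sum_{\alpha,i}\nabla f_{i}^{[\alpha]}(x),
\end{equation*}
where $\varepsilon(t)\to 0$ by Lipschitz continuity of $\nabla\tilde{f}$ and Step~2. Convexity of each $f_{i}^{[\alpha]}$ makes $\bar{g}$ a gradient of the convex, radially unbounded function $\bar{F}(x):=(N\cdot M)^{-1}\sum_{\alpha,i}f_{i}^{[\alpha]}(x)$, whose unique minimizer is $x^{\star}$. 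Applying $\tfrac{d}{dt}\bar{F}(\bar{y}(t))=-\varsigma(t)\|\bar{g}(\bar{y})\|^{2}+\varsigma(t)\bar{g}(\bar{y})^{\top}\varepsilon(t)$, integrating, and using $\int_{0}^{\infty}\varsigma\,d\tau=\infty$ together with Barbalat's lemma, I would conclude $\bar{g}(\bar{y}(t))\to 0$, hence $\bar{y}(t)\to x^{\star}$. Combined with $\tilde{y}\to0$, this yields $y(t)\to x^{\star}\mathbf{1}_{N\cdot M}$.

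The main obstacle will be making the slow-time-scale argument rigorous: separating the persistence condition $\int\varsigma=\infty$ from the decay $\varsigma\to0$ in such a way that the vanishing perturbation $\varepsilon(t)$ does not accumulate to spoil convergence. This is a continuous-time analogue of the Robbins--Monro stochastic approximation step-size condition, and handling it typically requires a careful Barbalat/LaSalle-type argument combined with uniform continuity of $\bar{g}\circ\bar{y}$, both of which are available here because trajectories are bounded and $\nabla\tilde{f}$ is Lipschitz.
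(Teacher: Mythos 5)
Your overall architecture---split $y$ into its average and its disagreement component, show the disagreement decays because the supra-Laplacian term dominates the vanishing-gain gradient, then treat the average as a slowly driven gradient flow---is exactly the paper's, and your Step~2 matches the paper's treatment of $\dot\psi=-\mathrm{diag}(\lambda_2,\dots,\lambda_{N\cdot M})\psi+\xi(t)$ as an exponentially stable linear system with a vanishing input almost verbatim. However, two of your three steps have concrete gaps. The first is boundedness: the candidate you propose ($\tilde f(y)-\tilde f(y^{\star})$ together with $\tfrac12\|\tilde y\|^2$, controlled by a bound of the form $\dot V\le\varsigma(t)\|\nabla\tilde f(y)\|\|\dot y\|$) does not close, because the right-hand side is only controlled once you already know $y$ is bounded, and $\tfrac{d}{dt}\tilde f(y)$ contains the sign-indefinite cross term $-\nabla\tilde f(y)^{\top}\mathcal{L}y$. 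The device you are missing is the plain quadratic function $V(y)=\tfrac12\|y-y^{\star}\|_2^2$: since $\mathcal{L}y^{\star}=\mathcal{L}x^{\star}1_{N\cdot M}=0$ and, by convexity, $(y-y^{\star})^{\top}\nabla\tilde f(y)\ge\tilde f(y)-\tilde f(y^{\star})\ge 0$, one gets $\dot V=-\varsigma(t)(y-y^{\star})^{\top}\nabla\tilde f(y)-(y-y^{\star})^{\top}\mathcal{L}(y-y^{\star})\le 0$ immediately, hence $\|y(t)-y^{\star}\|\le\|y_0-y^{\star}\|$ for all $t$. That is how the paper gets boundedness in one line, without circularity.

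The second gap is your Step~3. Integrating $\tfrac{d}{dt}\bar F(\bar y)=-\varsigma\|\bar g(\bar y)\|^2+\varsigma\,\bar g(\bar y)^{\top}\varepsilon(t)$ and invoking Barbalat does not work as stated: because $\int_0^{\infty}\varsigma=\infty$ while you only know $\varepsilon(t)\to 0$, the perturbation integral $\int\varsigma|\varepsilon|$ may diverge, so you cannot conclude $\int\varsigma\|\bar g\|^2<\infty$; and Barbalat applied to $t\mapsto\varsigma(t)\|\bar g(\bar y(t))\|^2$ would only show that this product vanishes, which is automatic since $\varsigma\to 0$ and says nothing about $\bar g$. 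You correctly flag this as the main obstacle, but flagging it is not resolving it. The paper's resolution is the time reparametrization $\tau=\int_0^t\varsigma(s)\,ds$, which is onto $[0,\infty)$ precisely by the persistence condition; in $\tau$-time the average obeys $\tfrac{d\eta}{d\tau}=-\tfrac1n\sum_{\alpha,i}\partial f_i^{[\alpha]}\bigl((U\psi)_i+\eta\bigr)$, i.e.\ a gradient flow with unit gain and an additive error that vanishes as $\tau\to\infty$ by Step~2. A quadratic Lyapunov function $W(\eta)$ in $\eta-\eta^{\star}$ then satisfies $\tfrac{dW}{d\tau}\le f(\eta^{\star})-f(\eta)+\epsilon|\eta-\eta^{\star}|$ for $\tau\ge\tau_{\epsilon}$, and radial unboundedness together with $\epsilon\to 0$ forces $\eta\to\eta^{\star}=x^{\star}$. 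Your own Robbins--Monro analogy points exactly at this rescaling; you should carry it out in place of the Barbalat step.
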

\begin{proof}
See Appendix \ref{Proofthm3}.
\end{proof}

Recently, some results on continuous-time multiagent neurodynamic systems to tackle nonsmooth distributed optimization problems with general local convex constraints but not necessarily differentiable functions have been proposed \cite{ma2019novel}\cite{guo2024neurodynamic}. These results have a similar simple structure for single-layer networks as the one proposed in this work for multilayer networks. It would be interesting to use the neurodynamic approach to extend the proposed algorithm to include nonsmooth functions.

In the next section, we illustrate the effectiveness of the proposed algorithms in several numerical experiments. 

\section{Numerical Experiments}

\subsection{Multiplex Network with Two Layers and Three Nodes in Each Layer}

We consider a multiplex network with $M=2$ layers and $N=3$ nodes in each layer.
We study the distributed optimization problem via the saddle-point algorithm to solve the following optimization problem
\begin{eqnarray}
&&\min _{y\in \mathbb{R}^{3\cdot 2}}\tilde{f}(y)=\sum _{i=1}^{3}f^{[1]}(y_{i}^{[1]})+\sum _{i=1}^{3}f^{[2]}(y_{i}^{[2]})\nonumber \\
&&\text{s.t. }\quad \mathcal{L}y =  0 _{3\cdot 2},
\label{opproblem3b}
\end{eqnarray}
where $f^{[1]}(y_{i}^{[1]})=\frac{1}{2} (y_{i}^{[1]})^{2}+iy_{i}^{[1]}$, $f^{[2]}(y_{i}^{[2]})=\frac{1}{2} (y_{i}^{[2]})^{2}+(i+3)y_{i}^{[2]}$, and the supra-Laplacian matrix with $D^{[1]}=D^{[2]}=D^{[1,2]}=1$. The results of the optimal consensus are shown in Fig. \ref{multi3}. We can observe a rapid convergence to the optimal $y^{\star}$ due to the value of the interdiffusion constant $D^{[1,2]}$ and the number of nodes in each layer.



\begin{figure}[!h]
\includegraphics[width=3.5in]{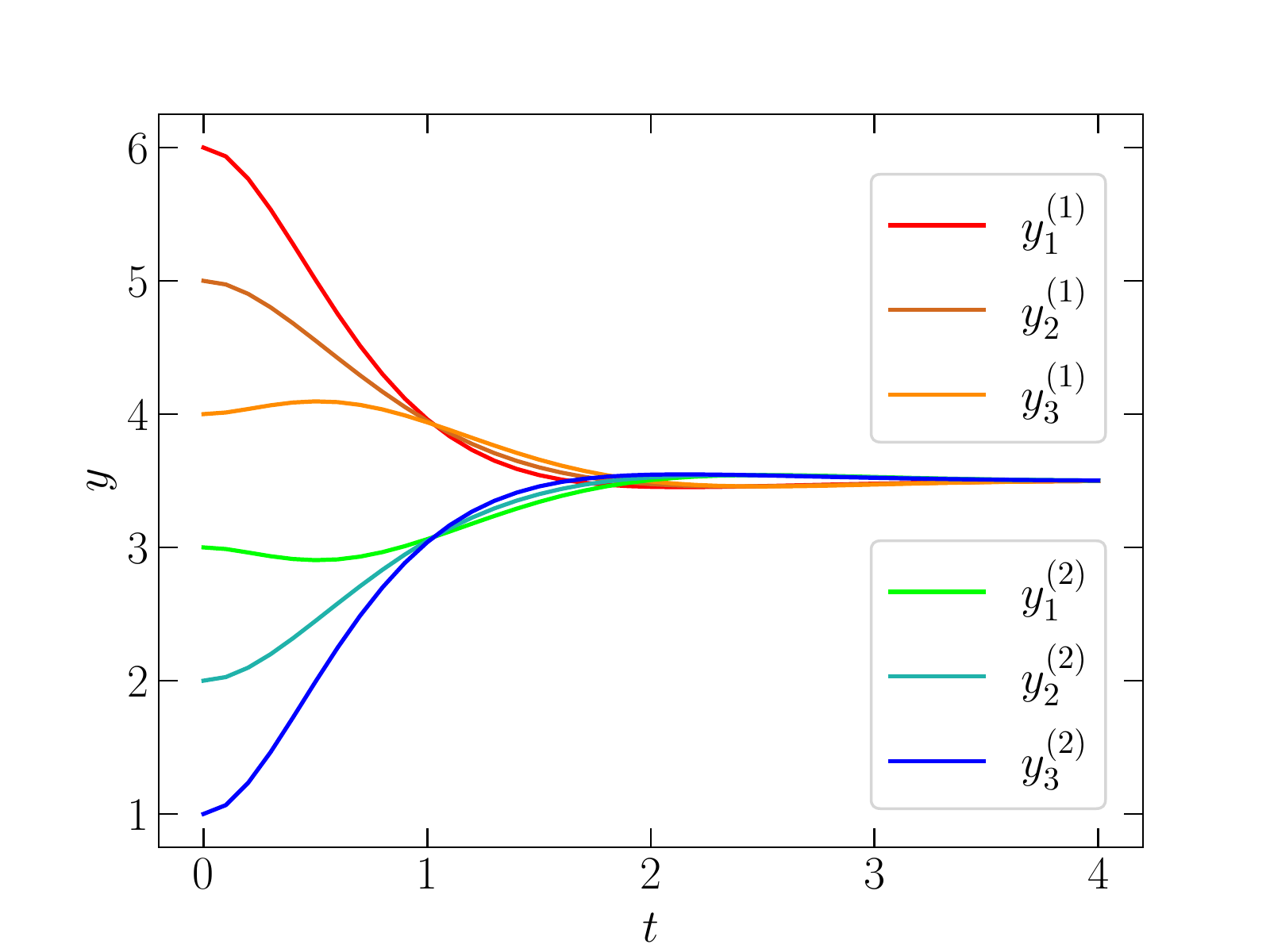}
\caption{Simulation results for the distributed optimization problem (\ref{opproblem3b}) with supra-Laplacian matrix.}
\label{multi3}
\end{figure}

Now, to the same problem, we apply the distributed gradient descent considering the time-varying flow (\ref{distgflow2}) and a time-varying positive gain $\varsigma (t,\theta)$ defined by
\begin{equation*}
\varsigma (t,\theta)=\frac{1}{\theta + t}.
\end{equation*}

The results of the time-varying flow (\ref{distgflow2}) associated with the optimization problem (\ref{opproblem3b})  are depicted in Fig. \ref{grad3_unidas}. Simulations are showing remarkable results, as $\theta \gg 1$ the optimal value is getting closer to the one obtained by the distributed primal-dual saddle-point algorithm; even, with a large $\theta$ the consensus times are also getting similar.

\begin{figure*}[htb]
\centering
  \begin{tabular}{@{}ccc@{}}
    \includegraphics[width=.32\textwidth]{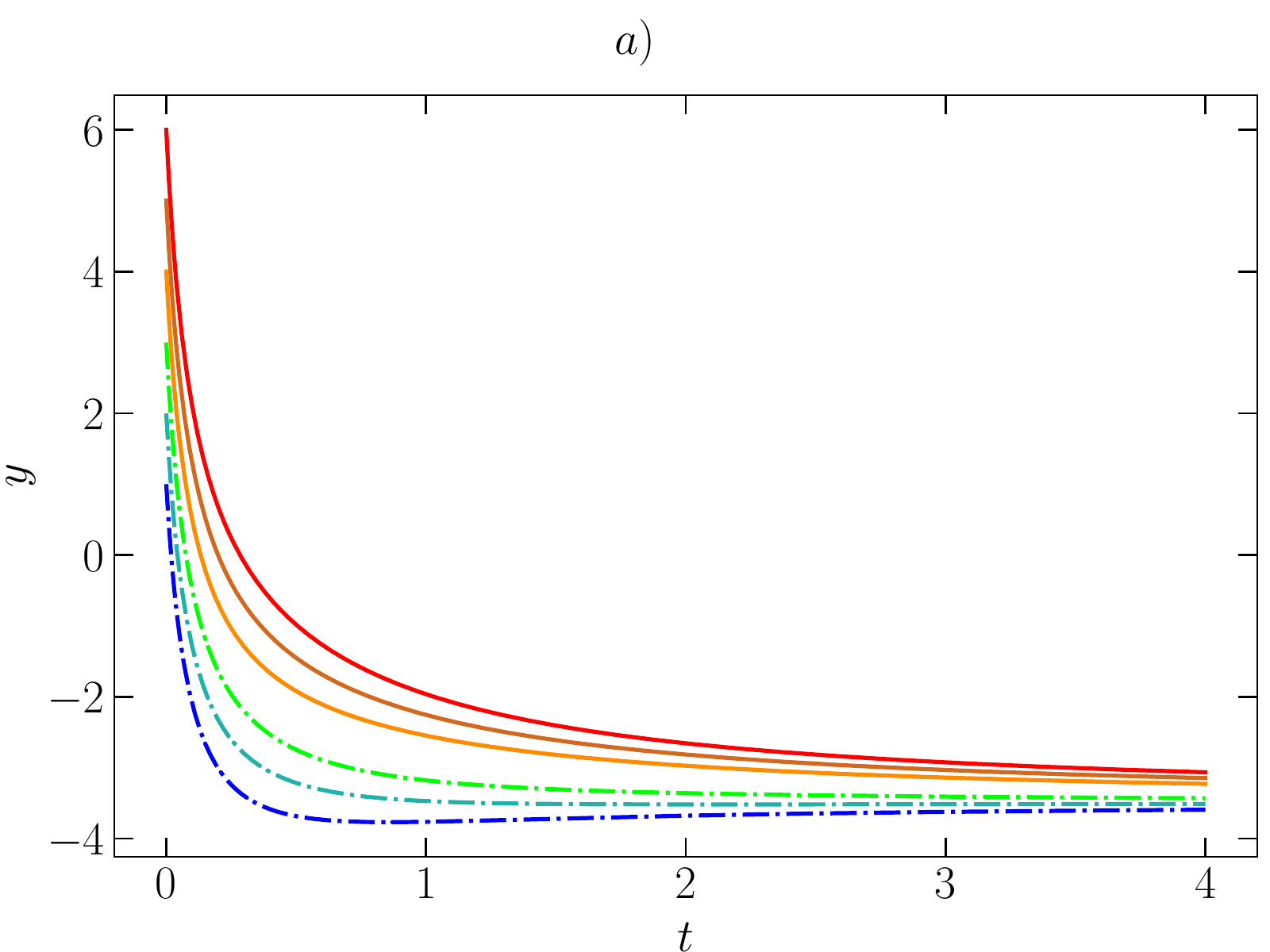} &
    \includegraphics[width=.32\textwidth]{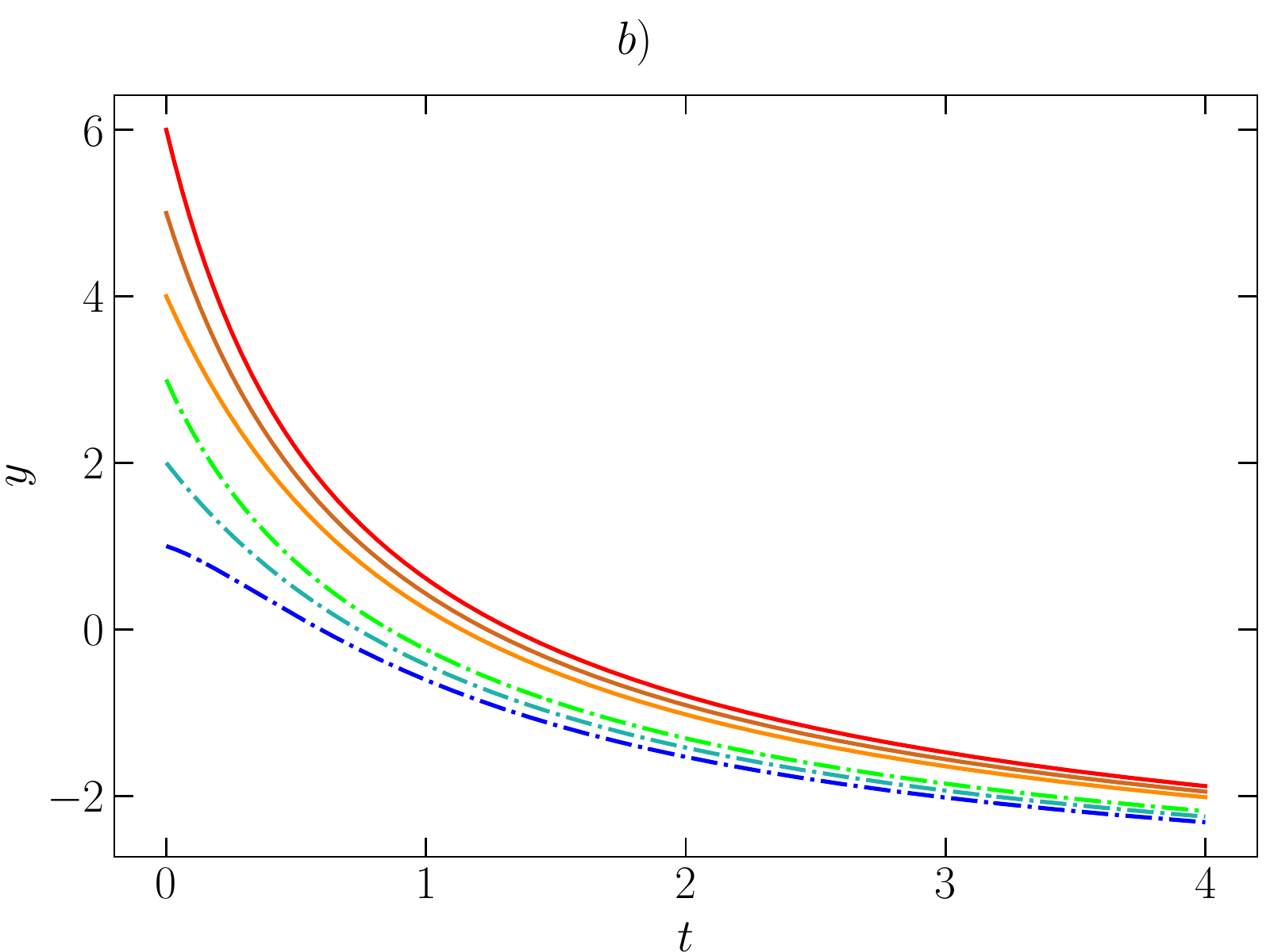} &
    \includegraphics[width=.32\textwidth]{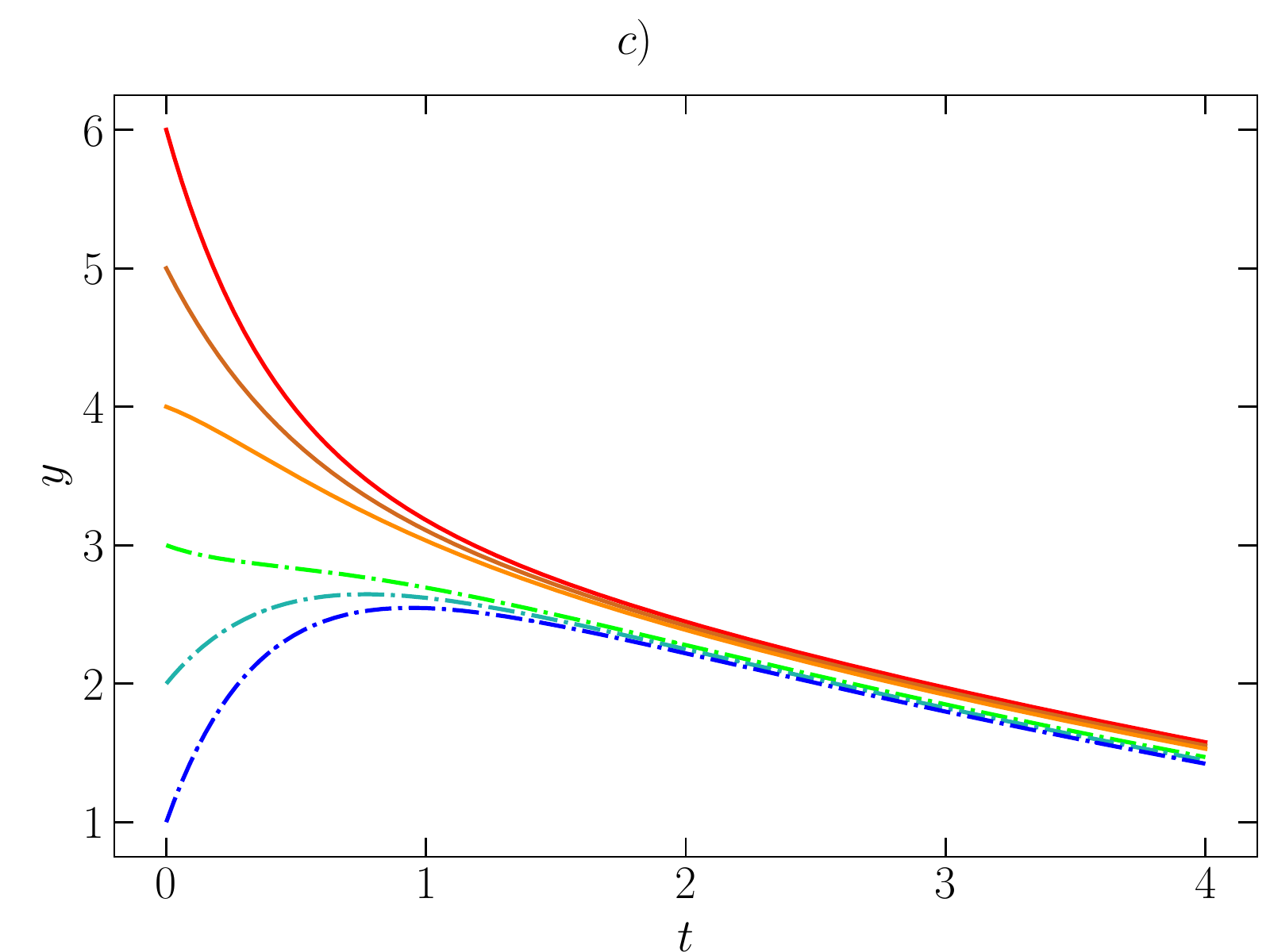} \\
    \includegraphics[width=.32\textwidth]{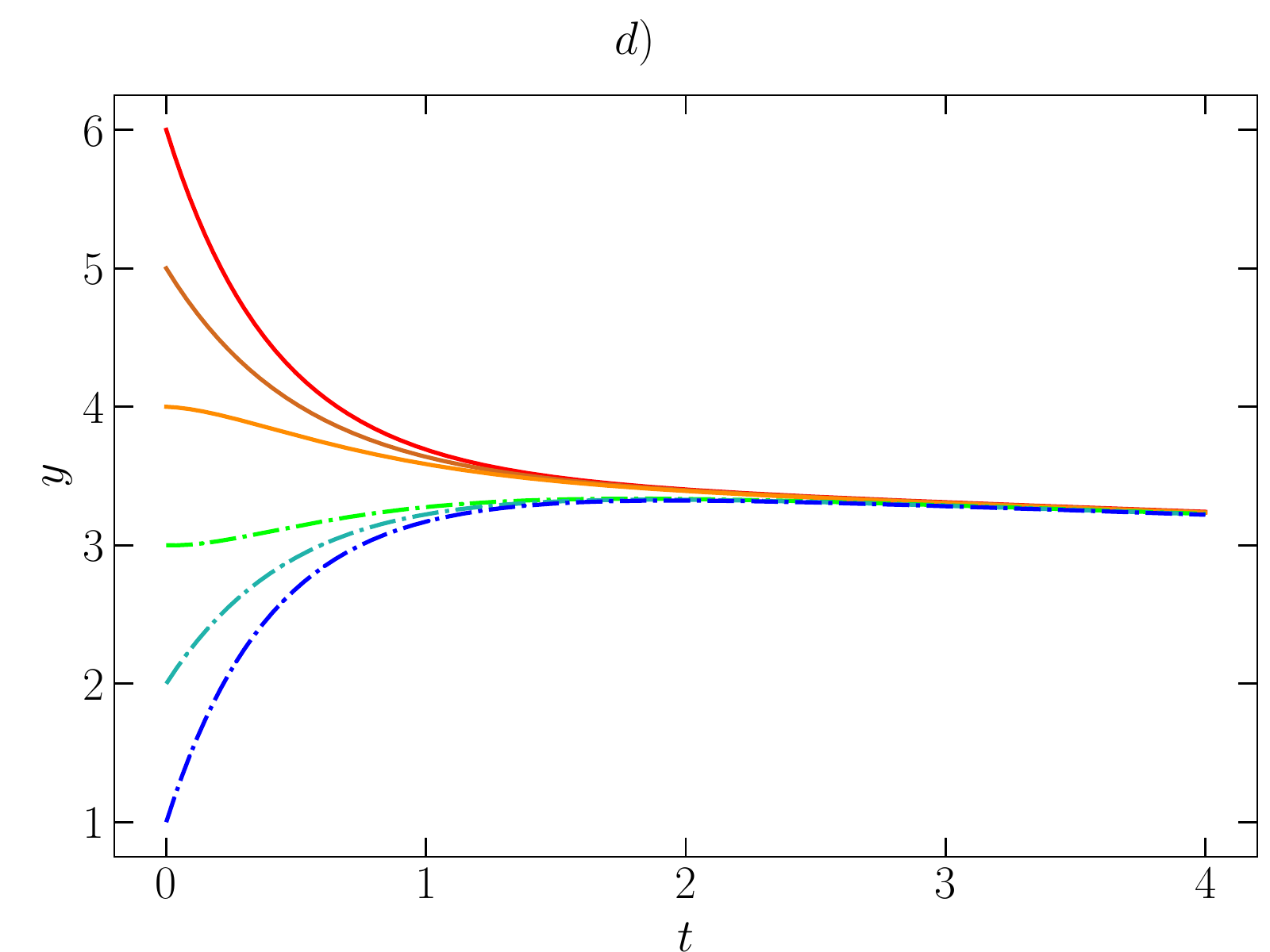} &
    \includegraphics[width=.32\textwidth]{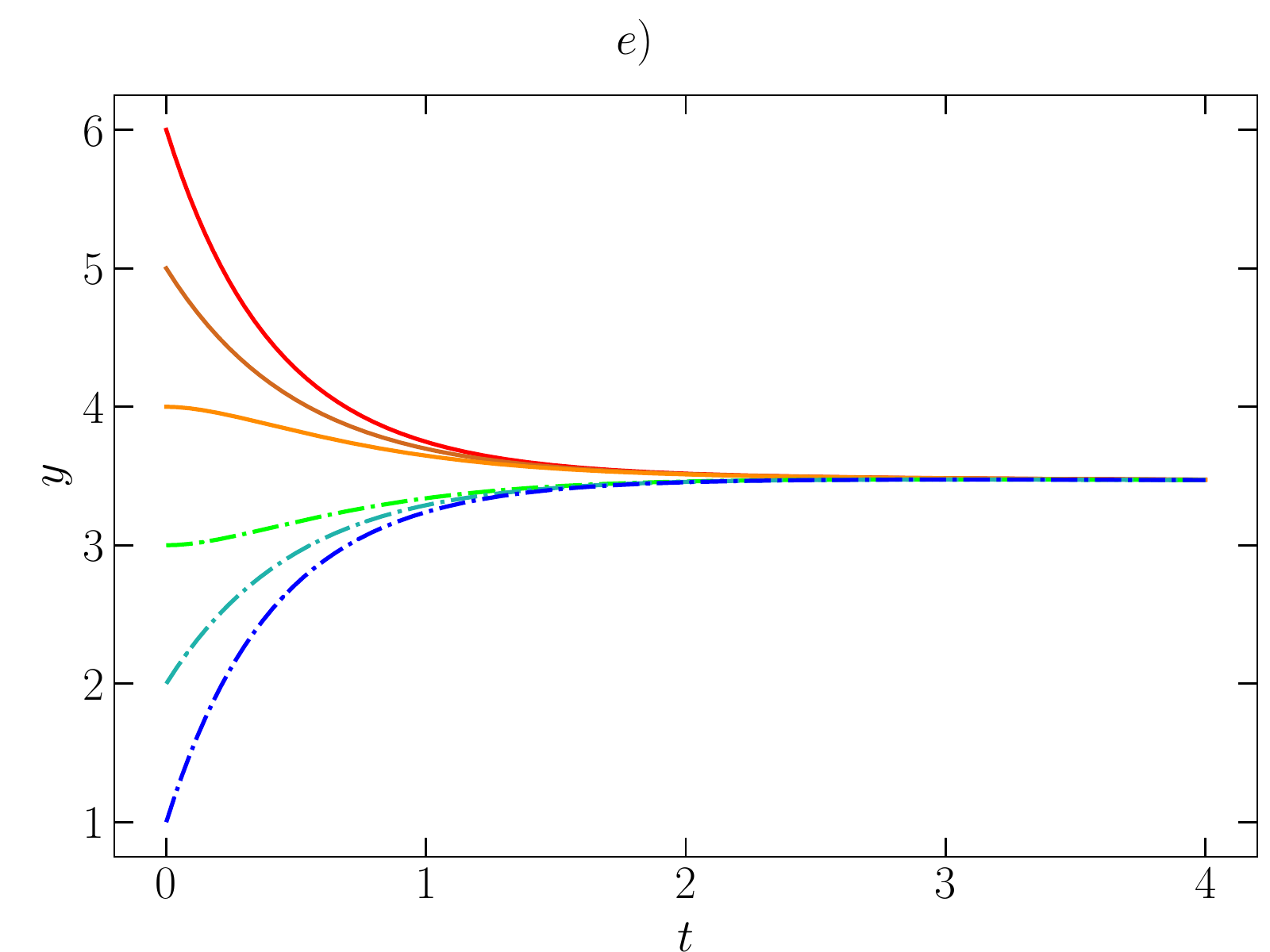} &
    \includegraphics[width=.32\textwidth]{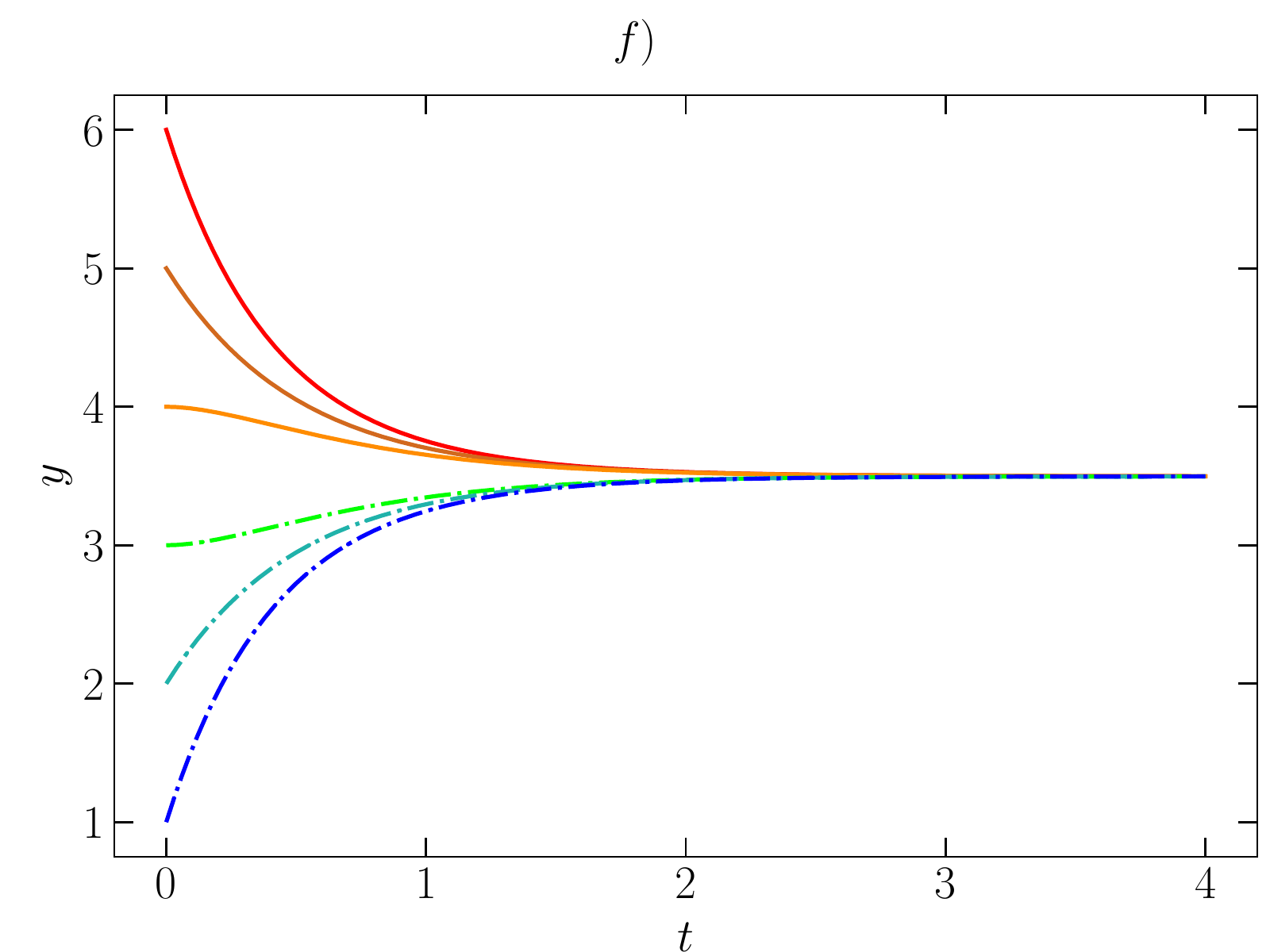} \\
  \end{tabular}
  \caption{Numerical results for the distributed optimization problem (\ref{opproblem3b}) resolved by distributed gradient descent considering the time-varying flow (\ref{distgflow2}) and a time-varying positive gain: a) $\varsigma(t,0.1)$, b) $\varsigma(t,1)$, c) $\varsigma(t,10)$, d) $\varsigma(t,100)$, e) $\varsigma(t,1000)$, and f) $\varsigma(t,10000)$. The conventions are the same as presented in Fig. \ref{multi3}.}
  \label{grad3_unidas}
\end{figure*} 


\subsection{Consensus Convergence to the Optimal as a Function of the Interlayer Diffusion Constant}  

The previous results, together with the fact that the interlayer diffusion constant $D_{x}$ is driving a critical phenomenon, motivate us to study the behavior of the consensus convergence to the optimal $y^{\star}$ times $t_{c}$ as a function of the interlayer diffusion constant $D_{x}$, considering the supra-Laplacian second eigenvalue connected with diffusion times. In Fig. \ref{consensusg1},  we depict the time at which each $y_{i}^{[\alpha]}$ does reach the optimal for a two-layer multiplex network, wherein in the first layer we have a ring topology, and in the second one we have a full-connected network. We can observe that, as it was expected: First, the nodes will reach the optimal faster as $D_{x}\rightarrow \infty$. Second, we have found values of $D_{x}$ where there is present a discontinuity. That is to say, we have encountered a situation that is validating our hypothesis about the possible presence of critical phenomena driven by $D_{x}$ and its relationship with the diffusion dynamics, which at the same time, is related to the consensus and optimization processes in the multiplexes. As we can observe in Fig. \ref{consensusg1}, there are points where there are sudden changes that do not obey certain continuity but are presenting an optimization time process.

\begin{figure}[!h]
\includegraphics[width=3.4in]{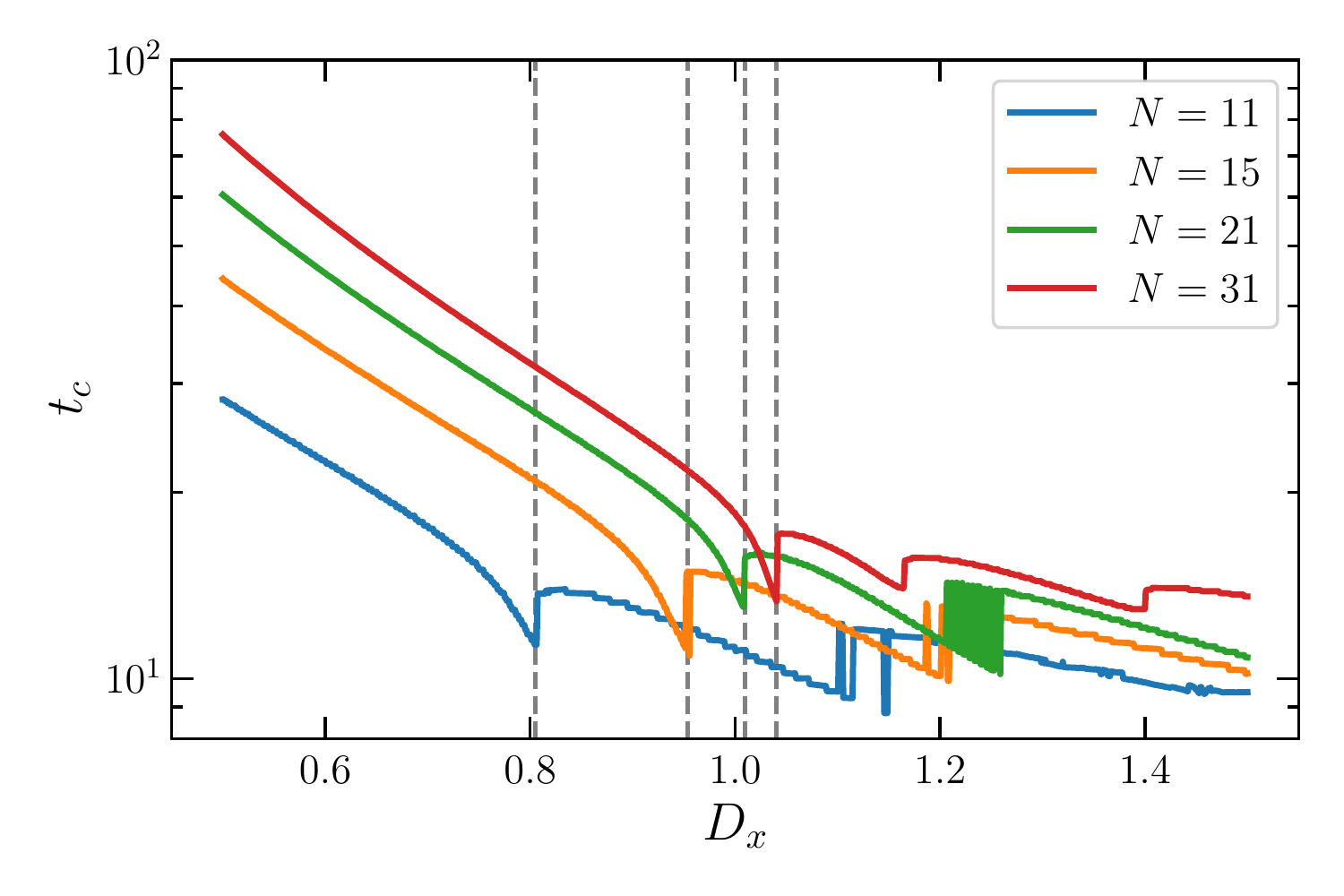}
\caption{Consensus convergence to the optimal as a function of the interlayer diffusion constant.}
\label{consensusg1}
\end{figure}

\subsection{Coordinated Dispatch for Multienergy System}

As a final example, let us explore an \emph{applied} situation where 2 + 1 restrictions are present. We study the case of three layers of micro-grids of seven power generators (each one) which are connected between them. Such is the case of coordinate dispatch for interdependent infrastructure networks. The multilayer dispatch problem seeks to minimize the total cost of operating both power and gas networks. The global objective function and power and gas constraints are defined as follows
\begin{equation}
\min _{x \in \mathbb{R}}\sum _{\alpha = 1}^{3}\sum _{i = 1}^{7}f_{i}^{[\alpha]}(x) ,
\end{equation}
which translated to primal-dual saddle-point language, gives us the problem of distributed power optimization
\begin{equation*}
\min_{q} \left( \sum_{\tau \in \mathcal{T}}C_{\tau}^{[\mathcal{T}]}(p_{\tau}) + \sum_{k \in \mathcal{K}}C_{k}^{[\mathcal{K}]}(p_k) + \sum_{j \in \mathcal{G}}C_{j}^{[\mathcal{G}]}(g_j)\right) ,
\end{equation*}
subject to
\begin{equation*}
\mathcal{L}q = 0,
\end{equation*}
\begin{equation*}
\sum_{\tau \in \mathcal{T}}p_{\tau} + \sum_{k \in \mathcal{K}}p_k = P_{D},
\end{equation*}
\begin{equation*}
\sum_{j \in \mathcal{G}}g_j - \sum_{k \in \mathcal{K}}\phi _{k}p_k = G_{D},
\end{equation*}
 
\begin{equation*}
\;  \underline{p}_i \leq p_i \leq \overline{p}_i, \; 0 \leq g_j \leq \overline{g}_j,
\end{equation*}
where $q$ is the vector which contains: the conventional power generation $p_{\tau}$ (whose generators are in layer $\mathcal{T}$), the gas-fired generation $p_{k}$ (whose generators are in layer $\mathcal{K}$), and the gas supply $g_{j}$ (whose generators are in layer $\mathcal{G}$). $\mathcal{L}$ is the supra-Laplacian matrix of the three-layer multiplex network (See Fig. \ref{figapp1}). $P_{D}$ is the electrical power demand, $G_{D}$ is the inelastic gas demand, and $\phi_k$ is the fuel conversion factor. $C_{\tau}^{[\mathcal{T}]}$, $C_{k}^{[\mathcal{K}]}$, and $C_{j}^{[\mathcal{G}]}$ are the cost functions of power production and gas suppliers in each layer.

\begin{figure}[t]
\includegraphics[width=3.4in]{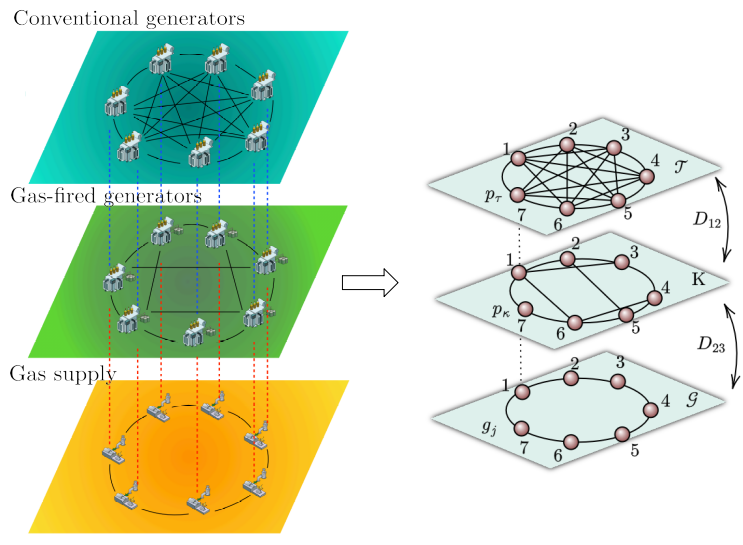}
\caption{Schematics for the three-layer micro-grid power generation and dispatch.}
\label{figapp1}
\end{figure}

The nodal time-varying gas demand $\phi_k p_k$ is determined based on the fuel consumption of natural gas-fired generators  via the fuel consumption factor. 

Therefore, the associated saddle-point flow yields
\begin{equation*}
    \begin{array}{ll}
\dot{q}&  =-\frac{\partial C (q)}{\partial q}-\mathcal{L}q-\mathcal{L}\lambda -\mu \frac{\partial g(q)}{\partial q},\\
 \dot{\lambda}   & =\mathcal{L}q,\\
 \dot{\mu}&=g(q)
\end{array}
\end{equation*}

with
\begin{equation*}
g (q) = \left(\begin{IEEEeqnarraybox*}[][c]{,c,}
\sum_{\tau \in \mathcal{T}}p_{\tau} + \sum_{k \in \mathcal{K}}p_k - P_{D}  \\
\sum_{j \in \mathcal{G}}g_j - \sum_{k \in \mathcal{K}}\phi _{k}p_k - G_{D} 
\end{IEEEeqnarraybox*}\right) .
\end{equation*}

Let us assume a cost function defined by

\begin{eqnarray*}
C(q)   =\begin{pmatrix}
		\cdots & C_{\tau }^{[\mathcal{T}]} & \cdots &	C_{\kappa }^{[ K ]} & \cdots &	C_{j}^{[\mathcal{G}]} &	\cdots 
	\end{pmatrix}^{\top}\\
   =\begin{pmatrix}
		\cdots & \frac{1}{2} p_{\tau }^{2}&	\cdots & \frac{1}{2} p_{\kappa }^{2}&	\cdots & \frac{1}{2} g_{j}^{2}& \cdots 
	\end{pmatrix}^{\top}.
\end{eqnarray*}

For this situation, the supra-Laplacian matrix reads
\begin{equation*}
\mathcal{L}=\begin{pmatrix}
		\mathcal{L}_{\mathcal{T}} & -D_{12} I_{7\times 7} & 0\\
		-D_{12} I_{7\times 7} & \mathcal{L}_{K} & -D_{23} I_{7\times 7}\\
		0 & -D_{23} I_{7\times 7} & \mathcal{L}_{\mathcal{G}}
	\end{pmatrix}, 
\end{equation*}
being $\mathcal{L}_{\mathcal{T}}$, $\mathcal{L}_{\mathcal{K}}$, and $\mathcal{L}_{\mathcal{G}}$ the Laplacian matrix for the layer $\mathcal{T}$, $\mathcal{K}$, and $\mathcal{G}$ respectively.

For the numerical implementation, we have that
\begin{align*}
	-\frac{\partial C(q)}{\partial q} & =-\begin{pmatrix}
		\cdots & p_{\tau }&	\cdots & p_{\kappa}& \cdots & g_{j}& \cdots 
	\end{pmatrix}^{\top} =-q .
\end{align*} 

Therefore, the temporal evolution of the vector $q$ is given by (with $n=7$)
\begin{align*}
	\dot{q} & =v\\
	\dot{v} & =-( I_{3n\times 3n} +\mathcal{L})v -\mathcal{L}^{2}q -g(q)\frac{\partial g(q)}{\partial q}.
\end{align*}

In Fig. \ref{figapp2} we depict the consensus dynamics for the following parameters: $D_{12} = D_{23} = D_x = 0.6$, $D_{ \mathcal{T}} = D_{\mathcal{G}} = 0.2$, $D_{K}=0.8$, $\phi_k=0.7$, $P_D= G_{D}=100$. We can observe how the nodes in each layer achieve consensus by converging to values that minimize the total cost of operating power and natural gas microgrids. Notice that, in this situation, conventional generators are converging to the same value of gas supply. Gas-fired generators are going to elevate cost value. We can observe, how the totally connected graph, is going to the same value as the less connected structure.
\begin{figure}[!h]
\includegraphics[width=3.4in]{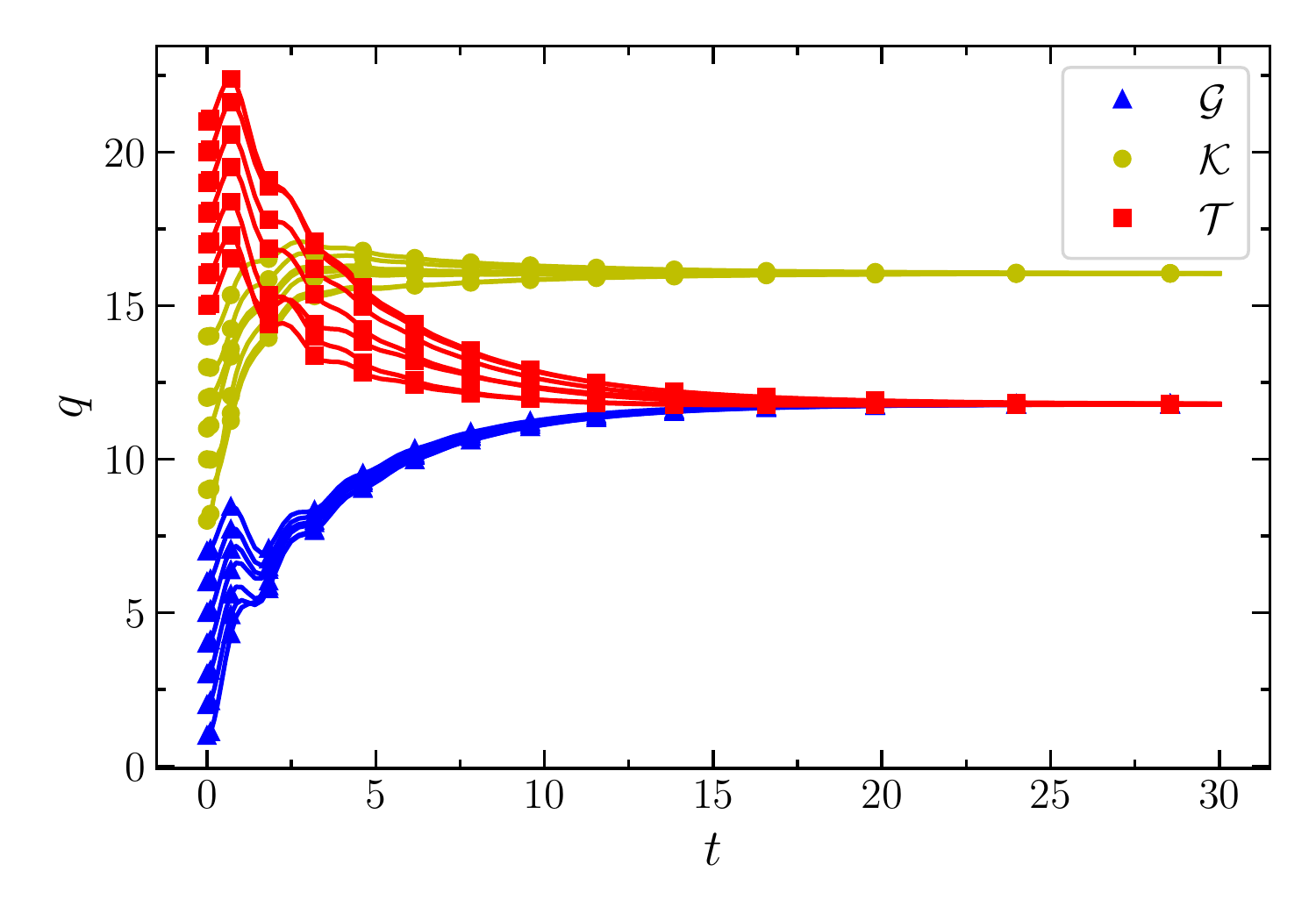}
\caption{Simulation for the coordinated dispatch for multienergy system with two-layers: energy and gas.}
\label{figapp2}
\end{figure}

\section{Conclusion}

Currently, considering recent advances in network science, statistical mechanics, and big data, it has become evident that different networked systems are part of larger structures that define network behavior. Multilayer networks have emerged as an effective tool to model these situations and understand how networks interact with other networks. Multilayer systems exhibit various phenomena seen in traditional monolayer networks, including percolation, phase transitions, diffusion, and epidemic spreading. Despite this, there is a lack of results generalizing distributed control and optimization from monolayer networks to multilayer networks. In this paper, we have obtained two algorithms for distributed optimization in a particular case of multilayer networks with a one-by-one relation between the nodes in each layer. By leveraging the relationship between diffusion and consensus dynamics and employing a control approach, we developed a distributed primal-dual saddle point algorithm that exhibits fast convergence speed and robustness. We found that intra- and interlayer diffusion constants act as control parameters for optimal consensus time. We observed that each layer achieves a local consensus before seeking a global consensus. Additionally, we discovered a critical phenomenon linked to the relationship between the consensus time and the interlayer diffusion constant. This finding complements previous work on determining diffusion dynamics for multilayer networks, which identified a phase transition related to the second eigenvalue of the supra-Laplacian matrix. Finally, we developed a distributed gradient descent algorithm for multilayer networks and observed that a time-varying positive gain plays an essential role in the convergence dynamics of the system.

Possible extensions to this work involve obtaining a diffusion operator for more general multilayer systems and applying the methodology presented here. Recent advancements in the tensor representation of multilayer networks have enabled the creation of such objects~\cite{battiston2020networks}. Additionally, recent research has developed Laplacian-like operators for hypergraphs and simplicial complexes~\cite{horak2013spectra, majhi2022dynamics}, which could be useful in developing distributed optimization algorithms for complex systems described using these general models~\cite{battiston2020networks}. Other related reaction-diffusion operators, such as the Dirac operator~\cite{calmon2023dirac}, could also be explored. Another potential extension involves using graph zeta functions to investigate diffusion dynamics in higher-order networks~\cite{chinta2015heat, saldivar2020functional}. These topics are currently being researched by the authors.


%



\appendices
\section{Proof of Theorem \ref{thm2}}
\label{Proofthm2}
\begin{proof}
First, consider the convergence analysis to the points $(y^{\star}, \lambda^{\star})$. Let $\tilde{y}=y-y^{\star}$ and $\tilde{\lambda}=\lambda -\lambda^{\star}$.  In the new variables, the saddle-point flow (\ref{saddleflow1})-(\ref{saddleflow2}) yields

\begin{equation}
\dot{\tilde{y}}=-\nabla_y \tilde{f}(y)+ \nabla_y \tilde{f}(y^{\star})-\mathcal{L}\tilde{y}-\mathcal{L}\tilde{\lambda},
\label{saddleflow3}
\end{equation} 

\begin{equation}
\dot{\tilde{\lambda}}=\mathcal{L}\tilde{y} .
\label{saddleflow4}
\end{equation}

Consider the following quadratic candidate Lyapunov function
\begin{equation}
V(\tilde{y},\tilde{\lambda})=\frac{1}{2}\tilde{y}^{T}\tilde{y}+\frac{1}{2}\tilde{\lambda}^{T}\tilde{\lambda}.
\label{lyapunov1}
\end{equation}

Then, the temporal derivative of (\ref{lyapunov1}) along the trajectories of (\ref{saddleflow1})-(\ref{saddleflow2}) is
\begin{equation}
\dot{V}(\tilde{y},\tilde{\lambda})=-\tilde{y}^{T}\nabla_y \tilde{f}(y)+\tilde{y}^{T}\nabla_y\tilde{f}(y^{\star})-\tilde{y}^{T}\mathcal{L}\tilde{y}.
\end{equation}

Based on the property that the gradient is a global under-estimator, we can conclude that
\begin{equation}
\tilde{f}(y^{\star})\geq \tilde{f}(y)+(y^{\star}-y)^{T}\nabla_y \tilde{f}(y).
\label{ineqproof}
\end{equation}

From inequality (\ref{ineqproof}), and if $\tilde{f}$ is strictly convex, we have
\begin{equation*}
-\tilde{y}^{T}\nabla_y \tilde{f}(y)+\tilde{y}^{T}\nabla_y \tilde{f}(y^{\star}) < 0.
\end{equation*}

Now, since the supra-Laplacian matrix is semi-positive definite, we obtain that
\begin{equation*}
\dot{V}(\tilde{y},\tilde{\lambda})=-\tilde{y}^{T}\nabla_y \tilde{f}(y)+\tilde{y}^{T}\nabla_y \tilde{f}(y^{\star})-\tilde{y}^{T}\mathcal{L}\tilde{y} <0.
\end{equation*}

However, $\dot{V}(\tilde{y},\tilde{\lambda})=0$ in the set $E=\{ (\tilde{y},\tilde{\lambda})|\tilde{y}=0 \}$. 
To demonstrate the global asymptotic stability of the equilibrium point $(y^{\star},\lambda ^{\star})$, we utilize LaSalle's invariance principle to prove that there are no trajectories in $E$ different from the equilibrium point $\tilde{y}=0$ and $\tilde{\lambda}=0$. Notice that if $\tilde{y}=0$ the system (\ref{saddleflow3})-(\ref{saddleflow4}) reduces to
\begin{equation*}
\dot{\tilde{y}}=-\mathcal{L}\tilde{\lambda}
\end{equation*}
\begin{equation*}
\dot{\tilde{\lambda}}=0.
\end{equation*}

It follows that since the supra-Laplacian matrix $\mathcal{L}$ is semi-positive definite and by Lemma \ref{lem1} the solution is $(0, \gamma 1_{N\cdot M})$ or $(0,0)$. According to Lemma \ref{lem1}, we can identify as $y^{\star}=x^{\star}1_{N\cdot M}$ the optimal points $(y^{\star}, \lambda^{\star})$. Furthermore, $\lambda^{\star} = \bar{\lambda}+\gamma 1_{N\cdot M}$, where $\gamma \in \mathbb{R}$ and $\bar{\lambda}\perp 1_{N\cdot M}$ satisfies
\begin{equation*}
\mathcal{L}\bar{\lambda}+\nabla_y \tilde{f}(y^{\star})=0_{N\cdot M}.
\end{equation*}

By examining the dual dynamics (\ref{saddleflow4}), we can observe that $1_{N\cdot M}\dot{\lambda}=0$. Hence, we have $\text{average}(\lambda (t))=\text{average}(\lambda_{0})$ $\forall t \geq 0$. Consequently, the convergence to a single saddle-point $(y^{\star}, \lambda ^{\star})$ of the flow (\ref{saddleflow3})-(\ref{saddleflow4}) is guaranteed, which satisfies $y^{\star}=x^{\star}1_{N\cdot M}$ and $\lambda^{\star} = \bar{\lambda}+ \text{average}(\lambda_{0}) 1_{N\cdot M}$.
\end{proof}

\section{Proof of Theorem \ref{thm3}}
\label{Proofthm3}
\begin{proof}
    First, we consider the analysis of the state $y$ boundedness by considering the following Lyapunov function
\begin{equation}
V(y)=\frac{1}{2}\| y - y^{*}\| _{2}^{2} .
\label{lyap51}
\end{equation}

The derivative of (\ref{lyap51}) along the trajectories of (\ref{distgflow2}) is 
\begin{equation*}
\dot{V}(y)=-\varsigma(t)(y-y^{*})^{T}\nabla_y \tilde{f}(y)-(y-y^{*})^{T}\mathcal{L}y.
\end{equation*}

Taking advantage of the of under estimator property (\ref{ineqproof}), and that $\mathcal{L}y = \mathcal{L}x^{*}1_{N\cdot M}=0$, the last expression yields
\begin{equation*}
\dot{V}(y)=-\varsigma(t)(\tilde{f}(y)-\tilde{f}(y^{*}))-(y-y^{*})^{T}\mathcal{L}(y-y^{*}).
\end{equation*}

Thus, $\dot{V}(y)\leq 0$, and then, the state $y$ is bounded. Therefore, the gap between the state $y$ and the optimizer is given by
\begin{equation}
\| y(t) - y^{*}\| \leq \| y _{0} - y^{*}\| .
\label{bound51}
\end{equation}

Within this result, we shall evaluate the asymptotic consensus and optimality via the following coordinate transformation
\begin{equation}
\begin{matrix}
\left[\begin{IEEEeqnarraybox*}[][c]{,c,}
\psi \\
\eta %
\end{IEEEeqnarraybox*}\right]
\end{matrix} =
\begin{matrix}
\left[\begin{IEEEeqnarraybox*}[][c]{,c,}
U ^{T} \\
1_{N\cdot M}^{T}/(N\cdot M) %
\end{IEEEeqnarraybox*}\right] y.
\end{matrix}
\end{equation}

Being $U$ an orthogonal matrix such that
\begin{equation}
U^{T}\mathcal{L}U = -\text{diag}(\lambda _{2},..., \lambda _{N\cdot M}) .
\end{equation}

The dynamics of the $\psi (t)$ function reads
\begin{equation}
\dot{\psi}(t) = - \text{diag}(\lambda _{2},..., \lambda _{N\cdot M}) \psi (t) + \xi (t),
\label{psi111}
\end{equation}
being $\xi (t)=-\varsigma (t) U^{T}\nabla_y \tilde{f}(y)$.

The input signal (or source term) in (\ref{psi111}) is bounded due to (\ref{bound51}) and it is decaying to zero by the properties established for $\varsigma (t)$. The system governed by (\ref{psi111}) is an exponentially stable linear system under the influence of the input $\xi(t)$. Therefore, $\psi (t) \rightarrow 0$ as $t \rightarrow \infty$, and $y(t)\rightarrow \eta $, that is, $\psi (t\rightarrow \infty)\in \text{span}(1_{N\cdot M})$, achieving asymptotically the consensus.

With the remaining coordinate $\eta = \text{average}(y)$, we shall study the asymptotic optimality. The dynamics driving this coordinate are given by

\begin{eqnarray}
\dot{\eta} &=& -\frac{\varsigma (t)}{N\cdot M}1_{N\cdot M}^{T}\nabla_y \tilde{f}(y) = -\frac{\varsigma (t)}{N\cdot M} \sum _{i = 1}^{N\cdot M}\nabla_{y_{i}} f _{i}(y_{i}) \nonumber
\\
&=&  -\frac{\varsigma (t)}{N\cdot M} \sum _{i = 1}^{N\cdot M}\frac{\partial f _{i}}{\partial y_{i}}((U\psi)_{i}+\eta) .
\end{eqnarray}

Considering the following time scaling
\begin{equation*}
\tau = \int _{0}^{t}\varsigma (t')dt'\, \Rightarrow \, \frac{d\tau}{dt}= \varsigma (t),
\end{equation*}
we have $\tau$ increasing in $t$ monotonically as $\varsigma (t) > 0$. Conversely, the integral persistence condition maps the interval $t\in [0, \infty]$ to $\tau=[0,\infty]$. Then the time coordinate transformation is invertible. The equations of motion that control the evolution of the coordinate $\eta$ in $\tau$-time-scale are expressed as follows 
\begin{equation}
\frac{d}{d\tau}\eta = -\frac{1}{n} \sum _{\alpha=1}^{M}\sum _{i=1}^{N}\frac{\partial f _{i}^{[\alpha]}}{\partial y_{i}^{[\alpha ]}}((U\psi)_{i}+\eta) .
\end{equation}

With this in mind, let us consider the following Lyapunov function
\begin{equation}
W(\eta) = \frac{n}{2}(\eta -\eta ^{*})^{2} ,
\label{lyapeta51}
\end{equation}
being $\eta ^{*}=x^{*}$ the minimum of $f(x)=\sum _{\alpha=1}^{M}\sum _{i=1}^{N}f_{i}^{[\alpha]}(x)$. 

The temporal-scaled derivative of (\ref{lyapeta51}) is

\begin{eqnarray*}
\frac{d}{d\tau}W(\eta) = -(\eta - \eta ^{*})\sum _{\alpha=1}^{M}\sum _{i=1}^{N}\frac{\partial f_{i}^{[\alpha]}}{\partial y_{i}^{[\alpha]}}(\eta)  \nonumber
\\
+ (\eta ^{*}-\eta)\sum _{\alpha=1}^{M}\sum _{i=1}^{N}\left[ \frac{\partial f_{i}^{[\alpha]}}{\partial y_{i}^{[\alpha]}}((U\psi)_{i}^{[\alpha]}+\eta) \right. - \left. \frac{\partial f_{i}^{[\alpha]}}{\partial y_{i}^{[\alpha]}}(\eta) \right]. 
\end{eqnarray*}

By the under estimator property, the first term of $\frac{d}{d\tau}W(\eta)$ is upper-bounded by $f(\eta ^{*})-f(\eta)$. The second term of $\frac{d}{d\tau}W(\eta)$ can be upper-bounded as follows
\begin{eqnarray*}
(\eta ^{*}-\eta)\sum _{\alpha=1}^{M}\sum _{i=1}^{N}\left[ \frac{\partial f_{i}^{[\alpha]}}{\partial y_{i}^{[\alpha]}}((U\psi)_{i}^{[\alpha]}+\eta) - \frac{\partial f_{i}^{[\alpha]}}{\partial y_{i}^{[\alpha]}}(\eta) \right] \nonumber\\ \leq | \eta ^{*}-\eta | \sum _{\alpha=1}^{M}\sum _{i=1}^{N} L_{i}^{[\alpha]}|(U\psi)_{i}^{[\alpha]} |.
\label{ineq511}
\end{eqnarray*}

It can be done due to the boundedness of the state $y$. Notice that $L_{i}^{[\alpha]}$ is the greatest Lipschitz constant of $\frac{\partial f _{i}^{[\alpha]}}{\partial y}$ in inequality (\ref{ineq511}), estimated in $\{ y \in \mathbb{R}^{N\cdot M} | \| y - y^{*}\| \leq \| y_{0} - y^{*}\| \}$, which is the invariant set defined by
\begin{equation}
L_{i}^{[\alpha]}=\max _{\| y - y^{*}\| \leq \| y_{0} - y^{*}\|} \text{Hess}f_{i}^{[\alpha]}(y) .
\end{equation}

Due to the asymptotic consensus, for each $\epsilon \geq 0$, there is a $\tau _{\epsilon}$ so that for all $\tau \geq \tau _{\epsilon}$ we have
\begin{equation*}
\sum _{\alpha=1}^{M}\sum _{i=1}^{N} L_{i}^{[\alpha]}|(U\psi)_{i}^{[\alpha]}|\leq \epsilon.
\end{equation*}

That is, for each $\tau \geq \tau _{\epsilon}$

\begin{equation}
\frac{d}{d\tau}W(\eta) \leq f(\eta ^{*})-f(\eta)+\epsilon |\eta ^{*}-\eta | \leq 0.
\end{equation}

Since $\tilde{f}$ (and also $f$) is radially unbound and convex, we have that there is a $|\eta _{\epsilon}|$ such that the temporal-scaled derivative of (\ref{lyapeta51}) is strictly negative for sufficiently large $|\eta ^{*}-\eta | \geq |\eta _{\epsilon}|$ and therefore $W(\eta (\tau \rightarrow \infty ))\leq W(\eta _{\epsilon})$. Additionally, $\eta _{\epsilon} \rightarrow 0$ as $\epsilon \rightarrow 0$. For any given $\tilde{\epsilon}\geq 0$, there exists a sufficiently small $\epsilon$ such that for $\tau \geq \tau _{\epsilon}$, $W(\eta (\tau))\leq \tilde{\epsilon}$. This implies that $W(\eta (\tau))$ approaches $0$ as $\eta (\tau)$ converges to the optimal value $\eta ^{*}$ in the limit of $\tau \rightarrow \infty$. The same convergence occurs for $\eta (t)$ as $t\rightarrow \infty$.
\end{proof}
 \section*{Acknowledgments}
This paper was partially supported by Minciencias Grant number CT
542-2020, \emph{Programa de Investigación en Tecnologías Emergentes para Microrredes Electricas Inteligentes con Alta Penetración de Energías Renovables} and the \emph{VIII Convocatoria para el Desarrollo y Fortalecimiento de los Grupos de Investigaci\'on en Uniminuto} with code C119-173, the \emph{Convocatoria de investigación para prototipado de tecnologías que promueven el cuidado o la restauración del medioambiente} with code CPT123-200-5220, and Industrial Engineering Program from the Corporaci\'on Universitaria Minuto de Dios (Uniminuto, Colombia). We thank the Engineering and Physical Sciences Research Council (EPSRC) (Grants No. EP/R513143/1 and No. EP/T517793/1) for financial support.




\bibliographystyle{IEEEtran}
\bibliography{Multiplexbib}

%

%

\begin{IEEEbiographynophoto}{Christian David Rodr\'iguez-Camargo}
Received a B.S. degree in physics from Universidad Nacional de Colombia in 2014, an M.Sc. degree in physics from the Brazilian Center for Research in Physics (CBPF) in 2016. He does research in relativistic quantum information, quantum biology, quantum field theory in curved spaces, smart grits and statistical field theory in network science within researching collaboration at The Atomic, Molecular, Optical and Positron Physics (AMOPP) group of the Department of Physics and Astronomy from the University College London and Programa de Investigaci\'on sobre Adquisici\'on y An\'alisis de Se\~nales (PAAS-UN) from Universidad Nacional de Colombia.
\end{IEEEbiographynophoto}

\begin{IEEEbiographynophoto}{Andr\'es F. Urquijo-Rodr\'iguez}
Received a B.S. and M.Sc. degree in physics from Universidad Nacional de Colombia, and he is currently pursuing his Ph.D. from Universidad Nacional de Colomba. He does research in network science, quantum information and condensed matter physics in low dimensional systems. He is an assistant professor at Corporación Universitaria Minuto de Dios, Bogotá, Colombia.
\end{IEEEbiographynophoto}

\begin{IEEEbiographynophoto}{Eduardo Mojica-Nava}
He received a B.S. degree in Electronics Engineering from Universidad Industrial de Santander in 2002, an M.Sc. degree in Electronics and Computer Science Engineering from Universidad de Los Andes, and a Ph.D. degree in Automatique et Informatique Industrielle from École des Mines de Nantes, Nantes, France and also Universidad de Los Andes in 2010. From 2011 to 2012, he was a Post-Doctoral Researcher at Universidad de Los Andes. He has been visiting professor at Université de Mons, Mons, Belgium, and Politecnico de Milano, Milan, Italy. Currently, he is a full professor at Universidad Nacional de Colombia, Bogotá, Colombia. 
\end{IEEEbiographynophoto}







\end{document}